\newcommand{\Z}{{\textsf{\textup{Z}}}}
\newtheorem{thm}{Theorem}
\newtheorem{defi}[thm]{Definition}
\newtheorem{rem}[thm]{Remark}
\newtheorem{nota}[thm]{Notation}
\newtheorem{princ}[thm]{Principle}
\newtheorem{ack}[thm]{Acknowledgement}
\newtheorem*{tempo*}{Template}
\newcommand\be{\begin{equation}}
\newcommand\ee{\end{equation}} 
\def\bdefi{\begin{defi}}
\def\edefi{\end{defi}}
\def\bnota{\begin{nota}\rm}
\def\enota{\end{nota}}
\def\FIVE{\Pi_{1}^{1}\text{-\textup{\textsf{CA}}}_{0}}
\def\SIX{\Pi_{2}^{1}\text{-\textsf{\textup{CA}}}_{0}}
\def\ATR{\textup{\textsf{ATR}}}
\def\ZFC{\textup{\textsf{ZFC}}}
\def\ZF{\textup{\textsf{ZF}}}
\def\RCA{\textup{\textsf{RCA}}}
\def\({\textup{(}}
\def\){\textup{)}}
\def\RCAo{\textup{\textsf{RCA}}_{0}^{\omega}}
\def\ACAo{\textup{\textsf{ACA}}_{0}^{\omega}}
\def\WKL{\textup{\textsf{WKL}}}
\def\bye{\end{document}}
\def\N{{\mathbb  N}}
\def\R{{\mathbb  R}}
\def\SS{\textup{\textsf{S}}}
\def\di{\rightarrow}
\def\asa{\leftrightarrow}
\def\ACA{\textup{\textsf{ACA}}}
\def\QFAC{\textup{\textsf{QF-AC}}}
\def\AC{\textup{\textsf{AC}}}
\def\PHP{\textup{\textsf{PHP}}}
\def\NCC{\textup{\textsf{NCC}}}
\def\cocode{\textup{\textsf{cocode}}}
\def\NIN{\textup{\textsf{NIN}}}
\def\NCC{\textup{\textsf{NCC}}}
\def\NBI{\textup{\textsf{NBI}}}
\def\IND{\textup{\textsf{IND}}}
\def\eps{\varepsilon}
\numberwithin{equation}{section}
\numberwithin{thm}{section}
\begin{document}
\title{A note on continuous functions on metric spaces}
\author{Sam Sanders}
\address{Department of Philosophy II, RUB Bochum, Germany}
\email{sasander@me.com}
\keywords{Reverse Mathematics, higher-order arithmetic, representations and coding, compact metric space, continuity.}
\subjclass[2010]{03B30, 03F35}
\begin{abstract}
Continuous functions on the unit interval are relatively \emph{tame} from the logical and computational point of view. 
A similar behaviour is exhibited by continuous functions on compact metric spaces \emph{equipped with a countable dense subset}.  
It is then a natural question what happens if we omit the latter `extra data', i.e.\ work with `unrepresented' compact metric spaces.  
In this paper, we study basic third-order statements about continuous functions on such unrepresented compact metric spaces in Kohlenbach's higher-order Reverse Mathematics.  
We establish that some (very specific) statements are classified in the (second-order) Big Five of Reverse Mathematics, while most variations/generalisations are not provable from the latter, and much stronger systems.
Thus, continuous functions on unrepresented metric spaces are `wild', though `more tame' than (slightly) discontinuous functions on the reals.  
\end{abstract}

\setcounter{page}{0}
\tableofcontents
\thispagestyle{empty}
\newpage

\maketitle
\thispagestyle{empty}

\section{Introduction}
In a nutshell, we study basic third-order statements about continuous functions on `unrepresented' metric spaces, i.e.\ the latter come \emph{without} second-order representation, working in Kohlenbach's higher-order Reverse Mathematics, as introduced in \cite{kohlenbach2} and Section \ref{uk}.  We establish that certain (very specific) such statements are classified in the second-order Big Five of Reverse Mathematics, while most variations/generalisations are not provable from the latter, and much stronger systems.  Thus, we generalise the results in \cite{dagsamXIV} to metric spaces, but restrict ourselves to continuous functions.  

\smallskip

We believe these results to be of broad interest as the logic (and even mathematics) community should be aware of the influence representations have on some of the most basic objects, like continuous functions on compact metric spaces, that feature in undergraduate curricula in mathematics and physics.  

\smallskip

Moreover, our results also shed new light on Kohlenbach's \emph{proof mining} program: as stated in \cite{kohlenbach3}*{\S17.1} or \cite{kopo}*{\S1}, the success of proof mining often crucially depends on \emph{avoiding} the use of separability conditions.  By the results in this paper, avoiding such conditions seems to be a highly non-trivial affair. 

\smallskip

We provide some background and motivation for these results in Section \ref{back}, including a gentle introduction to higher-order Reverse Mathematics.  We formulate necessary definitions and axioms in Section~\ref{prel} and prove our main results in Section~\ref{soc}.  Finally, some foundational implications of our results, including related to the coding practise of Reverse Mathematics, are discussed in Section \ref{fouind}.

\subsection{Motivation and background}\label{back}
\subsubsection{Introduction}
In a nutshell, the topic of this paper is the study of compact metric spaces in higher-order arithmetic; this section provides ample motivation for this study, as well as a detailed overview of our results.  

\smallskip

Now, we assume familiarity with the program \emph{Reverse Mathematics}, abbreviated `RM' in the below.  
An introduction to RM for the mathematician-in-the-street may be found in \cite{stillebron}, while \cites{simpson2, damurm} are textbooks on RM.  
We shall work in Kohlenbach's higher-order RM, introduced in Section~\ref{uk}.    In Section \ref{self}, we provide some motivation for higher-order RM (and this paper), based on the following items.  
\begin{itemize}
\item[(a)] \emph{Simplicity}: the coding of continuous functions and other higher-order objects in second-order RM is generally not needed in higher-order RM. 
\item[(b)] \emph{Scope}: discontinuous $\R\di \R$-functions have been studied by Euler, Abel, Riemann, Fourier, and Dirichlet, i.e.\ the former are definitely part and parcel of ordinary mathematics.  
Discontinuous functions are studied directly in higher-order RM; the second-order approach via codes has its problems.  
\item[(c)] \emph{Generality}: do the results of RM, like the Big Five phenomenon, depend on the coding practise of RM?  Higher-order RM provides a (much needed, in our opinion) negative answer in the case of continuous functions.  
\end{itemize}
Having introduced and motivated higher-order RM in Sections \ref{uk}-\ref{self}, we discuss the results of this paper in some detail in Section \ref{tonz}.  
As we will see, the motivation for the study of compact metric spaces in this paper is provided by items (a) and (c) above.  In particular, we investigate whether the representation of metric spaces in second-order RM has an influence on the logical properties of third-order theorems about compact metric spaces.  The answer turns out to be positive, for all but one very specific choice of definitions.    

\subsubsection{Higher-order Reverse Mathematics}\label{uk}
We provide a gentle introduction to Kohlenbach's \emph{higher-order} RM, including the base theory $\RCAo$.
Our focus is on intuitive understanding rather than full technical detail.  

\smallskip

First of all, the language of $\RCAo$ includes all finite types.  In particular, the collection of \emph{all finite types} $\mathbf{T}$ is defined by the two clauses:
\begin{center}
(i) $0\in \mathbf{T}$   and   (ii)  If $\sigma, \tau\in \mathbf{T}$ then $( \sigma \di \tau) \in \mathbf{T}$,
\end{center}
where $0$ is the type of natural numbers, and $\sigma\di \tau$ is the type of mappings from objects of type $\sigma$ to objects of type $\tau$.
The following table provides an overview of the most common objects, their types, and their order. 
\begin{center}
\begin{tabular}{c|c|c|c}
Symbol & order & type & name \\
\hline
$n\in \N$ or $n^{0}$ & first & 0 & natural number \\ 
 $X\subset \N$ & second & 1 & subset of $\N$\\
 $f\in 2^{\N}$ & second & 1 & element of Cantor space $2^{\N}$ \\
 $f\in \N^{\N}$ or $f^{1}$ & second & 1 & element of Baire space  $\N^{\N}$ \\
 $Y: \N^{\N}\di \N$ or $Y^{2}$ & third & 2 & mapping of Baire space to $\N$ \\
 $Y: \N^{\N}\di \N^{\N}$ or $Y^{1\di 1}$ & third & 2 & mapping of Baire space to Baire space \\
\end{tabular}
\end{center}
One often identifies elements of Cantor space $2^{\N}$ and subsets of $\N$, as the former can be viewed as characteristic functions for the latter.  Similarly, a subset $X\subset \N^{\N}$ is given by the associated characteristic function $\mathbb{1}_{X}:\N^{\N}\di \{0,1\}$.
In this paper, we shall mostly restricts ourselves to third-order objects.     

\smallskip

Secondly, a basic axiom of mathematics is that functions \emph{map equal inputs to equal outputs}.
The \emph{axiom of function extensionality} guarantees this behaviour and is included in $\RCAo$.
As an example, we write $f=_{1}g$ in case $(\forall n\in \N)(f(n)=g(n) )$ and say that `$f$ and $g$ are equal elements of Baire space'.  
Any third-order $Y^{1\di 1}$ thus satisfies the following instance of the axiom of function extensionality:
\be\tag{$\textup{\textsf{E}}_{1\di 1}$}
(\forall f, g\in \N^{\N})(f=_{1}g\di Y(f)=_{1}Y(g)).
\ee
Now, the real number field $\R$ is central to analysis and other parts of mathematics.  
The real numbers are defined in $\RCAo$ in \emph{exactly the same way} as in $\RCA_{0}$, namely as fast-converging Cauchy sequences.  
Hence, the formulas `$x\in \R$', `$x<_{\R} y$', and `$x=_{\R}y$' have their usual well-known meaning.  
To define functions on the reals, we let an `$\R\di \R$-function' be any $Y^{1\di 1}$ that satisfies
\be\tag{$\textup{\textsf{E}}_{\R\di \R}$}\label{allizat}
(\forall x, y\in \R)(x=_{\R}y\di Y(x)=_{\R}Y(y)), 
\ee
which is the axiom of function extensionality relative to the (defined) equality `$=_{\R}$'.  
We stress that all symbols pertaining to the real numbers in \eqref{allizat} have their usual second-order meaning.  

\smallskip

Thirdly, $\RCA_{0}$ is a system of `computable mathematics' that includes comprehension for $\Delta_{1}^{0}$-formulas and induction for $\Sigma_{1}^{0}$-formulas.  
The former allows one to build algorithms, e.g.\ via primitive recursion, while the latter certifies their correctness.  
The base theory $\RCAo$ includes axioms that prove $\Delta_{1}^{0}$-comprehension and $\Sigma_{1}^{0}$-induction, as expected.  Moreover, $\RCAo$ includes the defining axiom of the recursor constant $\mathbf{R}_{0}$, namely that for $m\in \N$ and $f\in \N^{\N}$, we have:
\be\label{special}
\mathbf{R}_{0}(f, m, 0):= m \textup{ and } \mathbf{R}_{0}(f, m, n+1):= f(n, \mathbf{R}_{0}(f, m, n)), 
\ee
which defines primitive recursion with second-order functions.  
We hasten to add that higher-order parameters are allowed; as an example, we could use in \eqref{special} the function $f\in \N^{\N}$ defined as $f(n):= Y(q_{n})$ for any $Y:\R\di \N$ and where $(q_{n})_{n\in \N}$ lists all rational numbers.  The previous example also illustrates -to our mind- the need for \emph{lambda calculus} notation, where we would simply define $f\in \N^{\N}$ as $\lambda n^{0}.Y(q_{n})$, underscoring that $n$ is the (only) variable and $Y$ a parameter.  The system $\RCAo$ includes the defining axioms for $\lambda$-abstraction via combinators.  

\smallskip

Fourth, second-order RM includes many results about codes for continuous functions, and we would like to `upgrade' these results to third-order functions that satisfy the usual `epsilon-delta' definition of continuity.  To this end, $\RCAo$ includes the following fragment of the Axiom of Choice, provable in $\ZF$:
\be\tag{$\QFAC^{1,0}$}
(\forall f\in \N^{\N})(\exists n^{0}\in \N)A(f, n)\di (\exists Y:\N^{\N}\di \N)(\forall f\in \N^{\N})A(f, Y(f)),
\ee
for any quantifier-free formula $A$.  Now, the following formula 
\be\label{walt}
\text{\emph{$\Phi\subset \N$ is a total code for a continuous $\N^{\N}\di \N^{\N}$-function}}
\ee
has exactly the form as in the antecedent of $\QFAC^{0,1}$.  
Applying the latter to \eqref{walt}, we readily obtain a function $Z^{1\di 1}$ such that $Z(f)$ equals the value of $\Phi$ at any $f\in \N^{\N}$.  
A similar argument goes through for codes of continuous $\R\di \R$-functions.  

\smallskip

Finally, $\RCAo$ and $\RCA_{0}$ prove the same second-order sentences, a fact that is proved via the so-called ECF-translation (see \cite{kohlenbach2}*{\S2}).  
In a nutshell, the latter replaces third- and higher-order objects by second-order codes for continuous functions, a concept not alien to second-order RM.  
We will discuss the coding of continuous functions in more detail in Section \ref{self}. 

\subsubsection{The coding practise of Reverse Mathematics}\label{self}
We discuss the coding practise of RM, which will be seen to provide motivation for Kohlenbach's higher-order RM.

\smallskip

First of all, second-order RM makes use of the rather frugal language of second-order arithmetic, which only includes variables for natural numbers $n\in \N$ and sets of natural numbers $X\subset \N$.
As a result, higher-order objects like functions on the reals and metric spaces, are unavailable and need to be `represented' or `coded' by second-order objects.          
This coding practise can complicate basic definitions: the reader need only compare the one-line `epsilon-delta' definition of continuity to the second-order definition 
in \cite{simpson2}*{II.6.1}; the latter takes the better half of a page.  Similar complications arise for metric spaces, where the reader can compare Definition \ref{donkc} below to \cite{simpson2}*{I.8.6}.   
Hence, a framework that includes third- and higher-order objects provides a \emph{simpler} approach.  

\smallskip

Secondly, discontinuous functions have been studied in mathematics long before the advent of set theory, by `big name' mathematicians like Euler, Dirichlet, Riemann, Abel, and Fourier, as discussed in \cite{samrep}*{\S5.2}. 
Hence, basic properties of discontinuous functions are part of ordinary mathematics and should be studied in RM.  
Higher-order RM provides a natural framework for the study of discontinuous functions, as explored in detail in \cites{kohlenbach2, dagsamXIV}.
By contrast, the study of discontinuous functions via codes is problematic\footnote{In a nutshell, $\RCA_{0}$ and $\RCAo$ are consistent with \emph{Church's thesis} $\textsf{CT}$, i.e.\ the statement \emph{all real numbers are computable}.  
In fact, the recursive sets form the minimal $\omega$-model of $\RCA_{0}$ by \cite{simpson2}*{I.7.5}.  Now, $\RCAo+\textsf{CT}$ proves that all functions on the reals are continuous, while in $\RCA_{0}+\textsf{CT}$, there are plenty of codes for discontinuous functions, e.g.\ for the Heaviside function.  Thus, $\RCAo$ exhibits a connection between second- and third-order objects and theorems, which is seemingly obliterated by coding functions.}, as discussed in detail in \cite{samrep}*{\S6.2.2}.

\smallskip

Thirdly, a central objective of mathematical logic is the classification of mathematical statements in hierarchies according to their logical strength.  
An example due to Simpson is the \emph{G\"odel hierarchy} from \cite{sigohi}. 
The goal of RM, namely finding the minimal axioms that prove a theorem of ordinary mathematics, fits squarely into this objective.  Ideally, the 
place of a given statement in the hierarchy at hand does not depend greatly on the representation used.  In particular, RM seeks to analyse theorems of ordinary mathematics `as they stand'.
The following quotes from \cite{simpson2}*{p.\ 32 and 137} illustrate this claim.  
\begin{quote}
The typical constructivist response to a nonconstructive mathematical theorem is to modify the theorem by adding hypotheses or ``extra data''.
In contrast, our approach in this book is to analyze the provability of mathematical theorems as they stand [\dots]
\end{quote}
\begin{quote}
[\dots] we seek to draw out the set existence
assumptions which are implicit in the ordinary mathematical theorems \emph{as they stand}.
\end{quote}
Essentially the same claim may be found in \cite{damurm}*{\S10.5.2} and in many parts of the RM literature.  The main point is always the same: RM ideally studies mathematics `as is' without any logical additions.   

\smallskip

Since the coding of continuous functions is conspicuously absent from mainstream mathematics, it is then a natural question whether the aformentioned coding has an influence 
on the classification of theorems in RM.  The following theorem implies that at least the Big Five phenomenon of RM does not really depend on the coding of continuous functions on various spaces.
\begin{thm}\label{floppi}
The system $\RCAo$ proves the following for $\mathbb{X}=\N^{\N}$ or $\mathbb{X}=\R$.
\begin{center}
\emph{Let $\Phi$ be a code for a continuous $\mathbb{X}\di \mathbb{X}$-function.  There is a third-order $F:\mathbb{X}\di \mathbb{X}$ such that $F(x)$ equals the value of $\Phi$ at $x$ for any $x\in \mathbb{X}$.}
\end{center}
The system $\RCAo+\WKL_{0}$ proves the following for $\mathbb{X}=2^{\N}$ or $\mathbb{X}=[0,1]$.
\begin{center}
\emph{Let the third-order function $F:\mathbb{X}\di \mathbb{X}$ be continuous on $\mathbb{X}$.  Then there is a code $\Phi$ such that $F(x)$ equals the value of $\Phi$ at any $x\in \mathbb{X}$.}
\end{center}
\end{thm}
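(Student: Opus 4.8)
The plan is to treat the two asserted implications separately, as they call for quite different tools.

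\emph{From codes to functions (in $\RCAo$).} This is the easy direction and follows the pattern sketched just before the theorem. First I would spell out the hypothesis ``$\Phi$ is a total code for a continuous $\mathbb{X}\di\mathbb{X}$-function'' as a formula of the shape $(\forall f\in\N^{\N})(\exists n\in\N)A(\Phi,f,n)$ with $A$ quantifier-free: for $\mathbb{X}=\N^{\N}$ this is essentially the totality clause (for every $f$ and every output-precision $m$, coded together into $f$, some initial segment of $f$ already determines the first $m$ output values), and for $\mathbb{X}=\R$ it is the analogous clause phrased with rational balls and fast-Cauchy sequences. Applying $\QFAC^{1,0}$ yields a functional $Y$ which, given an input together with a precision, returns how much of the input to inspect; reading off $\Phi$ at the input using $Y$ then defines the required third-order $F$. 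Monotonicity/coherence of a valid code makes $F$ well defined, and in the case $\mathbb{X}=\R$ the coherence conditions on $\Phi$ force the assigned value to depend only on the real number and not on its chosen representative, so $F$ satisfies $\textup{\textsf{E}}_{\R\di \R}$ and is an $\R\di\R$-function in the sense of \eqref{allizat}. No use of $\WKL_{0}$ is needed here.

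\emph{From functions to codes (in $\RCAo+\WKL_{0}$).} Now $\mathbb{X}$ is $2^{\N}$ or $[0,1]$, hence compact, and comes with its canonical countable dense subset $D$ (the finite binary strings with a tail of $0$'s, resp.\ $\Q\cap[0,1]$). The first point to record is that, although $F$ is third-order, the restriction $\lambda d. F(d)$ to $D$ is a genuine second-order object, obtained by $\lambda$-abstraction just as in the remarks following \eqref{special}. The plan then has three steps: (i) use $\WKL_{0}$ to upgrade the pointwise continuity of $F$ to a modulus of uniform continuity $h:\N\di\N$, so that $d(x,y)\le 2^{-h(k)}\di d(F(x),F(y))\le 2^{-k}$ for all $x,y\in\mathbb{X}$; (ii) using $h$ and $\lambda d. F(d)$, both of which are second-order, define the code $\Phi$ by $\Delta_{1}^{0}$-comprehension --- e.g.\ as the set of quintuples expressing ``$F$ maps the ball $B(d,2^{-h(n+2)})$ into $\overline{B}(q',2^{-n+1})$'' for $d\in D$ and $q'$ a rational $2^{-n}$-approximation to $F(d)$, which is legitimate in $\RCAo$ precisely because the defining property of $h$ makes the inclusion automatic and no quantifier over $\mathbb{X}$ survives in the matrix; (iii) verify inside $\RCAo$ that $\Phi$ satisfies the coherence axioms for a code and that the value of $\Phi$ at an arbitrary $x\in\mathbb{X}$ equals $F(x)$, using $h$ and the density of $D$.

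The hard part will be step (i). Pointwise continuity supplies, around each point of $\mathbb{X}$, a basic neighbourhood on which $F$ oscillates by less than any prescribed amount, and compactness of $\mathbb{X}$ should compress these into a uniform modulus; the obstacle is that this cover is not, as it stands, a second-order object --- the radius required at a point hides a universal quantifier over all of $\mathbb{X}$ --- so one has to present it through the dense set $D$ and then invoke Heine--Borel for $\mathbb{X}$ (a consequence of $\WKL_{0}$), checking that the resulting $D$-indexed family of basic neighbourhoods still covers $\mathbb{X}$ by appeal to pointwise continuity, and that a Lebesgue number of a finite subcover yields $h$. Equivalently, one runs a contradiction argument: the tree of finite conditions that do not yet force the first $m$ output coordinates of $F$ to stabilise --- defined using $F$ evaluated at points of $D$, hence $\Delta_{1}^{0}$ --- is, if infinite, given a branch by $\WKL_{0}$, and that branch names a point of $\mathbb{X}$ at which $F$ fails to be continuous, contradiction. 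Carrying out either version in $\WKL_{0}$ rather than $\ACA_{0}$, i.e.\ keeping every auxiliary set genuinely $\Delta_{1}^{0}$, is exactly where compactness of $\mathbb{X}$ is indispensable and where the real work lies; once $h$ is in hand, steps (ii)--(iii) are routine bookkeeping with codes.
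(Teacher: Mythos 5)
Your first half (codes to functions) is fine: it is exactly the $\QFAC^{1,0}$ argument the paper itself sketches in Section \ref{uk} around \eqref{walt}, and this is also how the sources cited for Theorem~\ref{floppi} handle it; note that the paper gives no in-text proof but defers to \cite{kohlenbach4}*{\S4} and \cite{dagsamXIV}*{\S2}.

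The second half has a genuine gap, and it sits precisely in the phrase ``defined using $F$ evaluated at points of $D$, hence $\Delta_{1}^{0}$''. In $\RCAo$ you may form sets only by $\Delta^{0}_{1}$-comprehension (higher-order parameters entering via terms such as $\lambda d.F(d)$), and $\WKL_{0}$/Heine--Borel can only be applied to trees, respectively covers, that actually exist as second-order objects. But your node condition ``$s$ does not yet force the first $m$ output coordinates'' is an \emph{unbounded} existential over the dense set $D$ (``there are $d,d'\in D$ in the basic neighbourhood determined by $s$ whose $F$-values differ by that much''), i.e.\ properly $\Sigma^{0}_{1}$ in the parameter $\lambda d.F(d)$, while the complementary ``good'' condition (small oscillation on \emph{all} of $D$ inside the neighbourhood) is properly $\Pi^{0}_{1}$. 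Bounding the witnesses in terms of $|s|$ to force decidability destroys prefix-closedness of the tree and also the argument that an infinite path yields a point of discontinuity; dually, your Heine--Borel variant needs the family of good basic neighbourhoods as an \emph{enumerated} sequence of open sets, and a $\Pi^{0}_{1}$-defined family cannot be enumerated in $\RCAo$, while any $\Sigma^{0}_{1}$/stage-wise over-approximation loses exactly the property you need after extracting the finite subcover (membership no longer guarantees small oscillation on the whole ball). So, as written, $\WKL_{0}$ simply cannot be applied to your tree or cover: the step you call routine is not the bookkeeping in (ii)--(iii) but the presentation of the compactness data as a second-order object, and that is where the cited proofs do real work. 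One standard way to bridge it is the excluded-middle trick of Remark~\ref{LEM}: in case $(\exists^{2})$ the $\Sigma^{0}_{1}/\Pi^{0}_{1}$ conditions above become decidable, the ``bad'' tree exists as a set and your path-to-discontinuity argument then goes through verbatim; the case $\neg(\exists^{2})$ must then be treated by a separate argument (there all third-order functions on $\N^{\N}$ are continuous, and one cannot just quote your tree construction either). Without some such additional ingredient, your step (i) is not a proof in $\RCAo+\WKL_{0}$, whereas once a modulus of uniform continuity is secured, your steps (ii)--(iii) are indeed the routine part.
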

\begin{proof}
See \cite{kohlenbach4}*{\S4} and \cite{dagsamXIV}*{\S2} for proofs. 
\end{proof}
As a corollary, we observe that over $\RCAo$, the \emph{second-order} axiom $\WKL_{0}$ is equivalent to \emph{third-order} theorems like \emph{for any third-order $F:[0,1]\di \R$, continuity implies boundedness}.  One could argue that continuous functions are `really' second-order, but the reader should keep the previous sentence in mind nonetheless.

\subsubsection{Metric spaces and higher-order Reverse Mathematics}\label{tonz}
Having introduced and motivated higher-order RM in the previous two sections, we can now discuss and motivate the results in this paper in some detail. 

\smallskip

First of all, an immediate corollary of Theorem \ref{floppi} is that over $\RCAo$, the \emph{second-order} axiom $\WKL_{0}$ is equivalent to many basic \emph{third-order} theorems from real analysis about continuous functions.  Motivated by this observation, Dag Normann and the author show in \cite{samBIG3,dagsamXIV, samBIG4} that \textbf{many} \emph{third-order} theorems from real analysis about (possibly) {discontinuous} functions on the reals,
are equivalent to the \emph{second-order} Big Five, over $\RCAo$. 
Moreover, \emph{slight} variations/generalisations of the function class at hand yield third-order theorems that are not provable from the Big Five \emph{and} the same for much stronger systems like $\Z_{2}^{\omega}+\QFAC^{0,1}$ from Section~\ref{prel}.  It is then a natural question whether a similar phenomenon can be found in other parts of mathematics and RM.

\smallskip

Secondly, in this paper, we study a different kind of generalisation than the one in \cite{dagsamXIV}: rather than going beyond the continuous functions, we study properties of the latter on compact metric spaces.  
Now, the study of the latter in second-order RM proceeds via codes: a \emph{complete separable metric space} is represented via a countable and dense subset, as can be gleaned from \cite{simpson2}*{II.5.1} or \cite{browner}.  By contrast, we use the standard textbook definition of metric space as in Definition \ref{donkc} without any additional data except that we are dealing with sets of reals.  This study is not just \emph{spielerei} as \emph{avoiding} separability is e.g.\ important in proof mining, as follows. 
\begin{quote}  
[\dots] it is crucial to exploit the fact that the proof to be analyzed does not use any separability assumption on the underlying spaces [\dots]. (\cite{kopo}*{\S1})
\end{quote}
\begin{quote}
It will turn out that for [the aforementioned uniformity conditions] to hold we -in particular- must not use any separability assumptions on the spaces. (\cite{kohlenbach3}*{p.\ 377})
\end{quote}
Thirdly, in light of the previous two paragraphs, it is then a natural question whether basic properties of compact metric spaces \emph{without separability conditions} are provable from second-order (comprehension) axioms or not.
Theorem \ref{klon} provides a (rather) negative answer: well-known theorems due to Ascoli, Arzel\`a, Dini, Heine, and Pincherle, formulated for metric spaces, are not provable in $\Z_{2}^{\omega}$, a conservative extension of $\Z_{2}$ introduced in Section \ref{prel}.  We only study metric spaces $(M, d)$ where $M$ is a subset of the reals or Baire space, i.e.\ the metric $d:M^{2}\di \R$ is just a third-order mapping.  
By contrast, some (very specific) basic properties of metric spaces are provable from the Big Five and related systems by Theorem \ref{typemon}.  
 
\smallskip

Fourth, the negative results in this paper are established using the uncountability of $\R$ as formalised by the following principles (see Section \ref{prel} for details).  
\begin{itemize}
\item $\NIN_{[0,1]}$: there is no injection from $[0,1]$ to $\N$.  
\item $\NBI_{[0,1]}$: there is no bijection from $[0,1]$ to $\N$.
\end{itemize}
In particular, these principles are not provable in relatively strong systems, like $\Z_{2}^{\omega}$ from Section \ref{prel}.  
In Section~\ref{soco}, we identify a long and robust list of theorems that imply $\NBI_{[0,1]}$ or $\NIN_{[0,1]}$.  
We have shown in \cite{dagsamX, dagsamXIV, samhabil} that many third-order theorems imply $\NIN_{[0,1]}$ while we only know few theorems that only imply $\NBI_{[0,1]}$.  
As will become clear in Section~\ref{refi}, metric spaces provide (many) elegant examples of the latter.  
We also refine our results in Section \ref{refi}, including connections to the RM of weak K\"onig's lemma and the Jordan decomposition theorem.  

\smallskip

In conclusion, we show that many basic (third-order) properties of continuous functions on metric spaces cannot be proved from second-order (comprehension) axioms 
when we omit the second-order representation of these spaces.   A central principle is the uncountability of the reals as formalised by $\NBI_{[0,1]}$ introduced above.  
These results carry foundational implications, as discussed in Section \ref{fouind}.

\subsection{Preliminaries and definitions}\label{prel}
We introduce some definitions, like the notion of open set or metric space in RM, and axioms that cannot be found in \cite{kohlenbach2}.  We emphasise that we only study metric spaces $(M, d)$ where $M$ is a subset of $\N^{\N}$ or $\R$, modulo the coding of finite sequences\footnote{We use $w^{1^{*}}$ to denote finite sequences of elements of $\N^{\N}$ and $|w|$ as the length of $w^{1^{*}}$.} of reals.  Thus, everything can be formalised in the language of third-order arithmetic, i.e.\ we do not really go much beyond analysis on the reals. 

\smallskip

Zeroth of all, we need to define the notion of (open) set.
Now, open sets are represented in second-order RM by \emph{countable unions of basic open balls}, namely as in \cite{simpson2}*{II.5.6}.  
In light of \cite{simpson2}*{II.7.1}, \emph{\(codes for\) continuous functions} provide an equivalent representation over $\RCA_{0}$. 
In particular, the latter second-order representation is exactly the following definition restricted to (codes for) continuous functions, as can be found in \cite{simpson2}*{II.6.1}.  
\bdefi\label{bchar}~
\begin{itemize}
\item A \emph{set} $U\subset \R$ \(and its complement $U^{c}$\) is given by $h_{U}:\R\di [0,1]$ where we say `$x\in U$' if and only if $h_{U}(x)>0$.  
\item A set $U\subset \R$ is \emph{open} if $y\in U$ implies $(\exists N\in \N)(\forall z\in B(y, \frac{1}{2^{N}})(z\in U )$.  
A set is closed if the complement is open. 
\item A set $U\subset \R$ is \emph{finite} if there is $N\in \N$ such that for any finite sequence $(x_{0}, \dots, x_{N})$, there is $i\leq N$ with $x_{i}\not \in U$.  
We sometimes write `$|U|\leq N$'.

\end{itemize}
\edefi
Now, codes for continuous functions denote third-order functions in $\RCAo$ by \cite{dagsamXIV}*{\S2}, i.e.\ Def.\ \ref{bchar} thus includes the second-order definition of open set.  To be absolutely clear, combining \cite{dagsamXIV}*{Theorem 2.2} and \cite{simpson2}*{II.7.1}, $\RCAo$ proves
\begin{center}
\emph{\textup{[}a second-order code $U$ for an open set\textup{]} represents an open set as in Def.~\ref{bchar}}. 
\end{center}
Assuming Kleene's quantifier $(\exists^{2})$ defined below, Def.\ \ref{bchar} is equivalent to the existence of a characteristic function for $U$; the latter definition is used in e.g.\ \cite{dagsamVII, dagsamXV}.  
The interested reader can verify that over $\RCAo$, a set $U$ as in Def.\ \ref{bchar} is open if and only if $h_{U}$ is lower semi-continuous. 

\smallskip

First of all, we shall study metric spaces $(M, d)$ as in Definition~\ref{donkc}, where $M$ comes with its own equivalence relation `$=_{M}$' and the metric $d$ satisfies 
the axiom of extensionality on $M$ as follows
\[
(\forall x, y, v, w\in M)\big([x=_{M}y\wedge v=_{M}w]\di d(x, v)=_{\R}d(y, w)\big).
\]
Similarly, we use $F:M\di \R$ to denote functions from $M$ to $\R$; the latter satisfy 
\be\tag{\textup{\textsf{E}}$_{M}$}
(\forall x, y\in M)(x=_{M}y\di F(x)=_{\R}F(y)), 
\ee
i.e.\ the axiom of function extensionality relative to $M$.
\bdefi\label{donkc}
A functional $d: M^{2}\di \R$ is a \emph{metric on $M$} if it satisfies the following properties for $x, y, z\in M$:
\begin{enumerate}
 \renewcommand{\theenumi}{\alph{enumi}}
\item $d(x, y)=_{\R}0 \asa  x=_{M}y$,
\item $0\leq_{\R} d(x, y)=_{\R}d(y, x), $
\item $d(x, y)\leq_{\R} d(x, z)+ d(z, y)$.
\end{enumerate}
We use standard notation like $B_{d}^{M}(x, r)$ to denote $\{y\in M: d(x, y)<r\}$.
\edefi
To be absolutely clear, quantifying over $M$ amounts to quantifying over $\N^{\N}$ or $\R$, perhaps modulo coding, i.e.\ the previous definition can be made in third-order arithmetic for the intents and purposes of this paper.  The definitions of `open set in a metric space' and related constructs are now clear \emph{mutatis mutandis}.

\smallskip

Secondly, the following definitions are now standard, where we note that the first item is called `Heine-Borel compact' in e.g.\ \cite{browner, brownphd}. 
Moreover, coded complete separable metric spaces as in \cite{simpson2}*{I.8.2} are only \emph{weakly complete} over $\RCA_{0}$.  
\bdefi[Compactness and around] For a metric space $(M, d)$, we say that
\begin{itemize}
\item $(M, d)$ is \emph{countably-compact} if for any $(a_{n})_{n\in \N}$ in $M$ and sequence of rationals $(r_{n})_{n\in \N}$ such that we have $M\subset \cup_{n\in \N}B^{M}_{d}(a_{n}, r_{n})$, there is $m\in \N$ such that  $M\subset \cup_{n\leq m}B^{M}_{d}(a_{n}, r_{n})$,
\item $(M, d)$ is \emph{strongly countably-compact} if for any sequence $(O_{n})_{n\in \N}$ of open sets in $M$ such that $M\subset \cup_{n\in \N}O_{n}$, there is $m\in \N$ such that  $M\subset \cup_{n\leq m}O_{n}$,
\item $(M, d)$ is \emph{compact} in case for any $\Psi:M\di \R^{+}$, there are $x_{0}, \dots, x_{k}\in M$ such that $\cup_{i\leq k}B_{d}^{M}(x_{i}, \Psi(x_{i}))$ covers $M$, 
\item $(M, d)$ is \emph{sequentially compact} if any sequence has a convergent sub-sequence,
\item $(M, d)$ is \emph{limit point compact} if any infinite set in $M$ has a limit point,
\item $(M, d)$ is \emph{complete} in case every \emph{Cauchy}$^{\ref{couchy}}$ sequence converges,
\item $(M, d)$ is \emph{weakly complete} if every \emph{effectively}\footnote{A sequence $(w_{n})_{n\in \N}$ in $(M, d)$ is \emph{Cauchy} if $(\forall k\in \N)(\exists N\in \N)(\forall m, n\geq N)( d(w_{n}, w_{m})<\frac{1}{2^{k}})$. A sequence is \emph{effectively Cauchy} if there is $g\in \N^{\N}$ such that $g(k)=N$ in the previous formula.\label{couchy}} Cauchy sequence converges,
\item $(M, d)$ is \emph{totally bounded} if for all $k\in \N$, there are $w_{0}, \dots, w_{m}\in \N$ such that $\cup_{i\leq m}B_{d}^{M}(w_{i}, \frac{1}{2^{k}})$ covers $M$.
\item $(M, d)$ is \emph{effectively totally bounded} if there is a sequence of finite sequences $(w_{n})_{n\in \N}$ in $M$ such that for all $k\in \N$ and $x\in M$, there is $i<|w_{k}|$ such that $x\in B(w(i), \frac{1}{2^{k}})$.
\item a set $C\subset M$ is \emph{sequentially closed} if for any sequence $(w_{n})_{n\in \N}$ in $C$ converging to $w\in M$, we have $w\in C$.  
\item $(M, d)$ has the \emph{Cantor intersection property} if any sequence of nonempty closed sets with $M\supseteq C_{0}\supseteq\dots \supseteq C_{n}\supseteq C_{n+1}$, has a nonempty intersection,
\item $(M, d)$ has the \emph{sequential Cantor intersection property} if the sets in the previous item are sequentially closed. 
\item $(M, d)$ is \emph{separable} if there is a sequence $(x_{n})_{n\in \N}$ in $M$ such that $(\forall x\in M, k\in \N)(\exists n\in \N)( d(x, x_{n})<\frac{1}{2^{k}})$.
\end{itemize}
\edefi
Thirdly, full second-order arithmetic $\Z_{2}$ is the `upper limit' of second-order RM.  The systems $\Z_{2}^{\omega}$ and $\Z_{2}^{\Omega}$ are conservative extensions of $\Z_{2}$ by \cite{hunterphd}*{Cor.\ 2.6}. 
The system $\Z_{2}^{\Omega}$ is $\RCAo$ plus Kleene's quantifier $(\exists^{3})$ (see e.g.\ \cite{dagsamXIV, hunterphd}), while $\Z_{2}^{\omega}$ is $\RCAo$ plus $(\SS_{k}^{2})$ for every $k\geq 1$; the latter axiom states the existence of a functional $\SS_{k}^{2}$ deciding $\Pi_{k}^{1}$-formulas in Kleene normal form.  
The system $\FIVE^{\omega}\equiv \RCAo+(\SS_{1}^{2})$ is a $\Pi_{3}^{1}$-conservative extension of $\FIVE$ (\cite{yamayamaharehare}), where $\SS_{1}^{2}$ is also called the Suslin functional. 
We also write $\ACAo$ for $\RCAo+(\exists^{2})$ where the latter is as follows 
\be\label{muk}\tag{$\exists^{2}$}
(\exists E:\N^{\N}\di \{0,1\})(\forall f \in \N^{\N})\big[(\exists n\in \N)(f(n)=0) \asa E(f)=0    \big]. 
\ee
Over $\RCAo$, $(\exists^{2})$ is equivalent to the existence of Feferman's $\mu$ (see \cite{kohlenbach2}*{Prop.\ 3.9}), defined as follows for all $f\in \N^{\N}$:
\[
\mu(f):= 
\begin{cases}
n & \textup{if $n$ is the least natural such that $f(n)=0$, }\\
0 & \textup{ if $f(n)>0$ for all $n\in \N$}
\end{cases}.
\]
Fourth, the uncountability of the reals, formulated as follows, is studied in \cite{dagsamX}.
\begin{itemize}
\item $\NIN_{[0,1]}$: there is no $Y:[0,1]\di \N$ that is injective\footnote{A function $f:X\di Y$ is injective if different $x, x'\in X$ yield different $f(x), f(x')\in Y$.}.
\item $\NBI_{[0,1]}$: there is no $Y:[0,1]\di \N$ that is both injective and surjective\footnote{A function $f:X\di Y$ is surjective if for every $y\in Y$, there is $x\in X$ with $f(x)=_{Y}y$.}. 
\end{itemize}
It is shown in \cite{dagsamXI, dagsamX} that $\Z_{2}^{\omega}$ cannot prove $\NBI_{[0,1]}$ and that $\Z_{2}^{\omega}+\QFAC^{0,1}$ cannot prove $\NIN_{[0,1]}$, where the latter 
is countable choice\footnote{To be absolutely clear, $\QFAC^{0,1}$ states that for every $Y^{2}$, $(\forall n\in \N)(\exists f\in \N^{\N})(Y(f, n)=0)$ implies $(\exists \Phi^{0\di 1})(\forall n\in \N)(Y(\Phi(n), n)=0)$.} for quantifier-free formulas.   Moreover, many third-order theorems imply $\NIN_{[0,1]}$, as also established in \cite{dagsamX}.
By contrast, that $\R$ cannot be enumerated is formalised by Theorem \ref{cant2}.
\begin{thm}\label{cant2}
For any sequence of distinct real numbers $(x_{n})_{n\in \N}$ and any interval $[a,b]$, there is $y\in [a,b]$ such that $y$ is different from $x_{n}$ for all $n\in \N$.
\end{thm}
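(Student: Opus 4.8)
The plan is to prove Theorem~\ref{cant2} by the classical nested-interval (Cantor) argument, paying attention only to the points where the weak base theory needs care. We may assume $a<_{\R}b$, since otherwise $[a,b]$ is degenerate.

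First I would define, by primitive recursion (using the recursor from \eqref{special}), a sequence of closed intervals $[c_{n},d_{n}]\subseteq [a,b]$ with $c_{n}<_{\R}d_{n}$, starting from $[c_{0},d_{0}]:=[a,b]$, so that $x_{n}\notin [c_{n+1},d_{n+1}]$ and $d_{n+1}-c_{n+1}=\tfrac{1}{3}(d_{n}-c_{n})$. For the step, set $e:=c_{n}+\tfrac{1}{3}(d_{n}-c_{n})$ and $f:=c_{n}+\tfrac{2}{3}(d_{n}-c_{n})$, so that $c_{n}<_{\R}e<_{\R}f<_{\R}d_{n}$. The point is that we need not locate $x_{n}$ exactly: the disjunction $x_{n}<_{\R}f \,\vee\, e<_{\R}x_{n}$ is provably true (as $e<_{\R}f$) and is $\Sigma_{1}^{0}$, so $\RCAo$ produces an index telling us which disjunct holds. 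In the first case put $[c_{n+1},d_{n+1}]:=[f,d_{n}]$, in the second put $[c_{n+1},d_{n+1}]:=[c_{n},e]$; either way this is a nondegenerate closed subinterval of $[c_{n},d_{n}]$ from which $x_{n}$ is excluded by a \emph{strict} inequality, and its length is $\tfrac{1}{3}(d_{n}-c_{n})$. Hence $d_{n}-c_{n}=(b-a)/3^{n}$.

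Next I would pass to the limit. Since $(c_{n})_{n\in\N}$ is nondecreasing, $(d_{n})_{n\in\N}$ nonincreasing, $c_{n}\leq_{\R}d_{m}$ for all $m,n$, and $d_{n}-c_{n}=(b-a)/3^{n}$ furnishes an explicit modulus, $(c_{n})_{n\in\N}$ converges to a real $y$ with $c_{n}\leq_{\R}y\leq_{\R}d_{n}$ for all $n$; in particular $y\in[a,b]$. Fixing $n$, we have $y\in[c_{n+1},d_{n+1}]$ while the construction gave either $x_{n}<_{\R}c_{n+1}$ or $d_{n+1}<_{\R}x_{n}$, so either $x_{n}<_{\R}y$ or $y<_{\R}x_{n}$, whence $y\neq_{\R}x_{n}$. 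As $n$ was arbitrary, $y$ is as required.

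The only genuinely delicate step is the one just flagged: that $\RCAo$ can both carry out the primitive recursion defining $(c_{n},d_{n})_{n\in\N}$ and, at each stage, decide which branch to take. I expect this to be routine — the branch is determined by an unbounded search for a witness to the provably true $\Sigma_{1}^{0}$-disjunction $x_{n}<_{\R}f\vee e<_{\R}x_{n}$ (with $\Sigma_{1}^{0}$-induction ensuring the search terminates), so the step function, and hence the whole recursion, is available in $\RCAo$ — but it is the one place where the argument uses more than naive interval-splitting.
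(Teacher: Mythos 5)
Your proof is correct and is essentially the argument the paper relies on: the paper does not prove Theorem~\ref{cant2} itself but cites the standard $\RCA_{0}$ proof (Simpson, II.4.9) and Bishop's constructive proof, both of which are exactly this nested-interval/trisection construction with the $\Sigma^{0}_{1}$ branch decision you describe. Your handling of the delicate points (deciding $x_{n}<_{\R}f\vee e<_{\R}x_{n}$ by unbounded search and carrying out the recursion in the base theory) matches that standard treatment, so nothing further is needed.
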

The previous theorem is rather tame, especially compared to $\NIN_{[0,1]}$.
Indeed, \cite{grayk} includes an efficient computer program that computes the number $y$ from Theorem \ref{cant2} in terms of the other data; a proof of Theorem \ref{cant2} in $\RCA_{0}$ can be found in \cite{simpson2}*{II.4.9}, while a proof in Bishop's \emph{Constructive Analysis} is found in \cite{bish1}*{p.\ 25}.  

\smallskip

Finally, the following remark discusses an interesting aspect of $(\exists^{2})$ and $\NIN_{[0,1]}$.
\begin{rem}[On excluded middle]\label{LEM}\rm
Despite the grand stories told in mathematics and logic about Hilbert and the law of excluded middle, 
the `full' use of the latter law in RM is almost somewhat of a novelty.  To be more precise, the law of excluded middle as in $(\exists^{2})\vee \neg(\exists^{2})$ is extremely useful, namely as follows:   
suppose we are proving $T\di \NIN_{[0,1]}$ over $\RCAo$.  
Now, in case $\neg(\exists^{2})$, all functions on $\R$ (and $\N^{\N}$) are continuous by \cite{kohlenbach2}*{Prop.\ 3.12}.
Clearly, any continuous $Y:[0,1]\di \N$ is not injective, i.e.\ $\NIN_{[0,1]}$ follows in the case that $\neg(\exists^{2})$.  
Hence, what remains is to establish $T\di \NIN_{[0,1]}$ \emph{in case we have} $(\exists^{2})$.  However, the latter axiom e.g.\ implies $\ACA_{0}$ (and sequential compactness) and can uniformly convert reals to their binary representations, which can simplify the remainder of the proof 

\smallskip

Here, $\NIN_{[0,1]}$ is just one example and there are many more, all pointing to a more general phenomenon: while invoking $(\exists^{2})\vee \neg(\exists^{2})$ may be non-constructive, it does lead to 
a short proof via case distinction:  in case $(\exists^{2})$, one has access to a stronger system while in case $\neg(\exists^{2})$, the theorem at hand is a triviality (like for $\NIN_{[0,1]}$ in the previous paragraph), or at least has a well-known second-order proof.  
We can also work over $\RCAo+\WKL_{0}$, noting that the latter establishes that continuous functions on $[0,1]$ or $2^{\N}$ have RM-codes (see \cite{dagsamXIV}*{\S2} and \cite{kohlenbach4}*{\S4}).
\end{rem}

\section{Analysis on unrepresented metric spaces}\label{soc}
We show that some (very specific) properties of continuous functions on compact metric spaces are classified in the (second-order) Big Five systems of Reverse Mathematics (Section \ref{refi}), while most variations/generalisations are not provable from the latter, and much stronger systems (Section~\ref{soco}).  
The negative results are (mostly) established by deriving $\NBI_{[0,1]}$ (Theorem \ref{klon}), which is not provable in $\Z_{2}^{\omega}$. 
We also show that $\NIN_{[0,1]}$ does not follow in most cases (Theorem \ref{zlonk}). 

\subsection{Obtaining the uncountability of the reals}\label{soco}
In this section, we show that basic properties of continuous functions on compact metric spaces, like Heine's theorem in item \eqref{itemb}, imply the uncountability of the reals as in $\NBI_{[0,1]}$.  These basic properties are therefore not provable in $\Z_{2}^{\omega}$.  

\smallskip

First of all, fragments of the induction axiom are sometimes used in an essential way in second-order RM (see e.g.\ \cite{neeman}).  
The equivalence between induction and bounded comprehension is also well-known in second-order RM (\cite{simpson2}*{X.4.4}).
We seem to need a little bit of the induction axiom as follows.
\begin{princ}[$\IND_{1}$]\label{IX}
Let $Y^{2}$ satisfy $(\forall n \in \N) (\exists !f \in 2^{\N})[Y(n,f)=0]$.  Then $ (\forall n\in \N)(\exists w^{1^{*}})\big[ |w|=n\wedge  (\forall i < n)(Y(i,w(i))=0)\big]$.  
\end{princ}
Note that $\IND_{1}$ is a special case of the axiom of finite choice, and is valid in all models considered in \cites{dagsam, dagsamII, dagsamIII, dagsamV, dagsamVI, dagsamVII, dagsamIX, dagsamX}, i.e.\ $\Z_{2}^{\omega}+\IND_{1}$ cannot prove $\NBI_{[0,1]}$.
We have (first) used $\IND_{1}$ in the RM of the Jordan decomposition theorem in \cite{dagsamXI}.

\smallskip

Secondly, the items in Theorem \ref{klon} are essentially those in \cite{browner}*{Theorem 4.1} or \cite{simpson2}*{IV.2.2}, but without codes.  
Equivalences of certain (coded) definitions of compactness are studied in second-order RM in e.g.\ \cite{browner2, brownphd}.  
\begin{thm}[$\RCAo+\IND_{1}$]\label{klon}
The principle $\NBI_{[0,1]}$ follows from any of the items \eqref{itema}-\eqref{itemlast} where $(M, d)$ is a metric space with $M\subset \R$.
\begin{enumerate}
\renewcommand{\theenumi}{\alph{enumi}}
\item For countably-compact $(M, d)$ and sequentially continuous $F:M\di \R$, $F$ is bounded on $M$.\label{itema}
\item Item \eqref{itema} with `bounded' replaced by `uniformly continuous'.\label{itemb}
\item Item \eqref{itema} with `bounded' replaced by `has a supremum'.\label{itemc}
\item Item \eqref{itema} with `bounded' replaced by `attains a maximum'.\label{itemd}
\item A countably-compact $(M, d)$ has the sequential Cantor intersection property.\label{iteme}
\item A countably-compact metric space $(M, d)$ is separable. \label{itemf}
\end{enumerate}
The previous items still imply $\NBI_{[0,1]}$ if we replace `countably-compact' by `compact' or `\(weakly\) complete and totally bounded' or `strongly countably-compact'.
\begin{enumerate}
\setcounter{enumi}{7}
\renewcommand{\theenumi}{\alph{enumi}}
\item For sequentially compact $(M, d)$, any continuous $F:M\di \R$ is bounded.\label{itemg}
\item Item \eqref{itemg} with `bounded' replaced by `uniformly continuous'.\label{itemgg}
\item Item \eqref{itemg} with `bounded' replaced by `has a supremum'.\label{itemh}
\item Item \eqref{itemg} with `bounded' replaced by `attains a maximum'.\label{itemi}
\item Items \eqref{itemg}-\eqref{itemi} assuming a modulus of continuity. \label{modulus}
\item Dini's theorem \(\cite{diniberg2, dinipi2, dinipi}\).  Let $(M, d)$ be sequentially compact and let $F_{n}: (M\times \N)\di \R$ be a monotone sequence of continuous functions converging to continuous $F:M\di \R$.  
Then the convergence is uniform.   \label{diniitem}
\item On a sequentially compact metric space $(M, d)$, equicontinuity implies uniform equicontinuity.  \label{equi}
\item \(Pincherle, \cite{tepelpinch}*{p.\ 67}\). For sequentially compact $(M, d)$ and continuous $F:M\di \R^{+}$, we have  $(\exists k\in \N)(\forall w\in M)(F(w)> \frac{1}{2^{k}} )$.\label{pinker}
\item \(Ascoli-Arzel\`a, \cite{simpson2}*{III.2}\).  For sequentially compact $(M, d)$, a uniformly bounded and equicontinuous sequence of functions on $M$ has a uniformly
convergent sub-sequence.\label{AA}
\item Any sequentially compact $(M, d)$ is strongly countably-compact. \label{itemlast2}
\item Any sequentially compact $(M, d)$ is separable. \label{itemii}
\item Any sequentially compact $(M, d)$ has the seq.\ Cantor intersection property.\label{itemlast}
\item A sequentually compact metric space $(M, d)$ is limit point compact. \label{itemiii}
\end{enumerate}
Items \eqref{itemg}-\eqref{modulus} are provable in $\Z_{2}^{\Omega}$ \(via the textbook proof\).  
\end{thm}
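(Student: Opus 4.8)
The plan is to argue by contraposition within $\RCAo+\IND_{1}$: I assume a bijection $Y:[0,1]\di \N$ exists and, for each item, exhibit a metric space $(M,d)$ (with $M\subset \R$) that satisfies the compactness hypothesis of that item while refuting its conclusion, thereby contradicting the item and yielding $\NBI_{[0,1]}$. The basic gadget is the \emph{punctured} space: put $M:=[0,1]$ with $=_{M}$ equal to $=_{\R}$, set $g(x):=\tfrac{1}{Y(x)+1}$, and define $d(x,y):=|g(x)-g(y)|$. Since $d$ is the pullback of the usual metric along $g$, the axioms of Definition~\ref{donkc} and extensionality of $d$ are immediate, with $d(x,y)=_{\R}0\asa x=_{M}y$ following from injectivity of $Y$. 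The decisive structural fact is that \emph{every point of $M$ is $d$-isolated}: if $Y(x)=m$ then the nearest possible $g$-value is $\tfrac1m$ or $\tfrac{1}{m+2}$, so $x$ sits in a ball containing no other point. Consequently \emph{every} $F:M\di \R$ is automatically continuous and sequentially continuous, carries a trivial modulus of continuity, and any family of functions on $M$ is equicontinuous; this discharges all continuity hypotheses in items \eqref{itemg}--\eqref{itemiii} for free, and lets me take $F:=Y$, which is unbounded on $M$ because its range is an infinite subset of $\N$.

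Establishing the \emph{compactness hypotheses} for this space is where the real work lies. Total boundedness is handled by $\IND_{1}$: fixing $k$, surjectivity of $Y$ gives, for each $i\le 2^{k}$, a unique preimage $Y^{-1}(i)$, and $\IND_{1}$ (finite choice) assembles these into a single finite tuple $w$; the balls $B_{d}^{M}(w(i),\tfrac{1}{2^{k}})$ then cover $M$, since any $x$ with $Y(x)\ge 2^{k}$ has $g(x)<\tfrac{1}{2^{k}}$ and lies in the ball centred at the preimage of $2^{k}$, the finitely many remaining points being centres. The same finite-tuple mechanism is intended to yield countable-compactness, sequential compactness, and weak completeness for the punctured space, and here lies the \textbf{main obstacle}: I must show these hold \emph{provably} in $\RCAo+\IND_{1}$. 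The point is that the unique obstruction to each of these weak compactness notions is the single infinite enumeration $(Y^{-1}(n))_{n\in\N}$ (a sequence running off toward the absent accumulation value $g=0$), and assembling this one infinite sequence requires countable choice, which $\RCAo+\IND_{1}$ lacks; thus $\IND_{1}$ is exactly calibrated, producing every finite cover but not the infinite witness to non-compactness. Turning this ``the bad witness cannot be formed'' heuristic into an actual derivation — verifying, for an arbitrary \emph{given} cover or sequence, that it reduces to the finite data manageable by $\IND_{1}$ — is the delicate step, and it is also the reason the conclusion is only $\NBI_{[0,1]}$ (unprovable in $\Z_{2}^{\omega}$) rather than $\NIN_{[0,1]}$: surjectivity of $Y$ is used essentially to build the covers.

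With the hypotheses in place, the contradictions are read off item by item. For the boundedness-type conclusions \eqref{itema}, \eqref{itemc}, \eqref{itemd}, \eqref{itemg}, \eqref{itemh}, \eqref{itemi}, \eqref{pinker}, the continuous $F=Y$ is unbounded and attains no supremum, maximum, or positive lower bound, directly contradicting the stated conclusion. For the uniform-continuity conclusions \eqref{itemb} and \eqref{itemgg}, I note that $F=Y$ fails to be uniformly continuous: choosing large $m$ with $\tfrac{1}{m+1}-\tfrac{1}{m+2}<\tfrac{1}{2^{N}}$ and the (surjectivity-provided) points of $Y$-value $m$ and $m+1$ gives two points at $d$-distance below $\tfrac{1}{2^{N}}$ whose $F$-values differ by $1$. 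The Cantor-intersection items \eqref{iteme} and \eqref{itemlast} are refuted by the nested nonempty closed sets $C_{n}:=\{x\in M:Y(x)\ge n\}$ (closed because each complement is a finite set of isolated points), whose intersection is empty since no real has $Y$-value $\infty$. Dini \eqref{diniitem}, equicontinuity \eqref{equi}, Ascoli--Arzel\`a \eqref{AA}, strong countable-compactness \eqref{itemlast2}, and limit-point-compactness \eqref{itemiii} are treated analogously, exploiting the isolated-point structure together with the non-formability of the offending infinite object.

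Finally, the separability items \eqref{itemf} and \eqref{itemii}, and the substitutions by `compact' or `strongly countably-compact', require the companion \emph{completed} space $K$, obtained by overriding $g(0):=0$ so that $K$ contains its single accumulation point and is genuinely (and provably, via $\IND_{1}$ on the finitely many $g$-values above any threshold) compact in all senses. On $K$ one cannot refute boundedness (continuity at the one limit point forces boundedness provably), so instead I refute separability directly: a dense sequence $(x_{n})_{n\in\N}$ would, by isolation, have to list every point $Y^{-1}(m)$ (for each such $m$ pick $k$ with $\tfrac{1}{2^{k}}$ below the isolation radius, forcing some $x_{n}=_{M}Y^{-1}(m)$), hence together with $0$ it enumerates all of $[0,1]$, contradicting Theorem~\ref{cant2}. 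The remaining bookkeeping — the extensionality axiom for $d$ and $\textup{\textsf{E}}_{M}$ for $F$, the ``$M\subset\R$ modulo coding'' conventions, and the adaptation of the single space to each compactness notion and to the function-sequence statements — is routine, so the proof's weight rests squarely on the provability of the compactness hypotheses flagged above.
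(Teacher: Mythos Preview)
Your proposal has a genuine gap, and it is precisely the one you flag as ``the delicate step'': turning the heuristic \emph{the bad witness cannot be formed} into an actual derivation in $\RCAo+\IND_{1}$. The problem is not merely that this step is delicate---it cannot be carried out for your space. Consider sequential compactness of your punctured space $(M,d)$ with $M=[0,1]$ and $d(x,y)=|g(x)-g(y)|$. Given a sequence $(x_{n})_{n\in\N}$, you need either a constant subsequence or a contradiction. If $(Y(x_{n}))_{n\in\N}$ is unbounded, you want to invoke Cantor's theorem, but an unbounded sequence of $Y$-values need not hit every natural number: a sequence with $Y(x_{n})=2n$ for all $n$ has no convergent subsequence in $(M,d)$ yet does not enumerate $[0,1]$. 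Nothing in $\RCAo+\IND_{1}+\neg\NBI_{[0,1]}$ rules out the existence of such a sequence, so you cannot prove sequential compactness. The same obstruction blocks countable-compactness: a cover by small balls indexed by such a sequence (with radii chosen so the $Y$-intervals $[2n-1,2n+1]$ overlap to cover $\N$) has no finite subcover, and you cannot extract an enumeration of $[0,1]$ from it because the centres $a_{n}$ only reveal the single values $Y(a_{n})$, not the other points in each ball. Your claim that the obstruction is ``unique'' (namely the sequence $(Y^{-1}(n))_{n\in\N}$) is therefore false.

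The paper's construction solves exactly this problem by a different choice of space: the points are finite \emph{initial segments} $w=(Y^{-1}(0),\dots,Y^{-1}(|w|-1))$ of the inverse enumeration, with metric based on $1/2^{|w|}$. The crucial feature is that each point of length $k$ \emph{contains as data} the first $k$ values of $Y^{-1}$. Thus any sequence $(w_{n})_{n\in\N}$ with unbounded lengths lets one compute $Y^{-1}(m)$ for every $m$ (find $n$ with $|w_{n}|>m$ using $\mu$, then read off $w_{n}(m)$), yielding an enumeration of $[0,1]$ and a contradiction with Theorem~\ref{cant2}. This is what makes the compactness verifications go through as genuine proofs in $\RCAo+\IND_{1}$ rather than meta-level plausibility arguments. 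Your space lacks this self-encoding property, and no amount of $\IND_{1}$ repairs it.
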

\begin{proof}
First of all, by Remark \ref{LEM}, we may assume $(\exists^{2})$ as $\NBI_{[0,1]}$ is trivial in case $\neg(\exists^{2})$. 
Now suppose $Y:[0,1]\di \N$ is a bijection, i.e.\ injective and surjective. Define $M$ as the union of the new symbol $\{0_{M}\}$ and the set $N:=\{ w^{1^{*}}: (\forall i<|w|)(Y(w(i))=i)\}$. 
We define `$=_{M}$' as $0_{M}=_{M}0_{M}$, $u\ne_{M} 0_{M}$ for $u\in N$, and $w=_{M}v$ if $w=_{1^{*}}v$ and $w, v\in N$.
The metric $d:M^{2}\di \R$ is defined as $d(0_{M}, 0_{M})=_{\R}0$, $d(0_{M}, u)=d(u, 0_{M})=\frac{1}{2^{|u|}}$ for $u\in N$ and $d(w, v)=|\frac{1}{2^{|v|}}-\frac{1}{2^{|w|}}|$ for $w, v\in N$.
Since $Y$ is an injection, we have $d(v, w)=_{\R}0 \asa  v=_{M}w$.  The other properties of a metric space from Definition \ref{donkc} follow by definition (and the triangle equality of the absolute value on the reals).  

\smallskip

Secondly, to show that $(M, d)$ is countably-compact, fix a sequence $(a_{n})_{n\in \N}$ in $M$ and a sequence of rationals $(r_{n})_{n\in \N}$ such that we have $M\subset \cup_{n\in \N}B^{M}_{d}(a_{n}, r_{n})$
Suppose $0_{M}\in B_{d}^{M}(a_{n_{0}}, r_{n_{0}})$ for $a_{n_{0}}\ne_{M}0_{M}$, i.e.\ $\frac{1}{2^{|a_{n_{0}}|}}=d(0_{M}, a_{n_{0}})<r_{n_{0}}$. 
Then $|\frac{1}{2^{|y|}}-\frac{1}{2^{|a_{n_{0}}|}}| =d(y, a_{n_{0}})<r_{n_{0}}$ holds for all $y\in N$ such that $|y|>|a_{n_{0}}|$.
Now use $\IND_{1}$ to enumerate the (finitely many) reals $z\in M$ with $|z|<|a_{n_{0}}|$.  
In this way, there exists a finite sub-covering of $\cup_{n\in \N}B^{M}_{d}(a_{n}, r_{n})$ of at most $|a_{n_{0}}|+1$ elements.
The proof is analogous (and easier) in case $a_{n_{0}}=_{M}0_{M}$.  
Thus, $(M, d)$ is a countably-compact metric space.  

\smallskip
  
Thirdly, define the function $F: M\di \R$ as follows: $F(0_{M}):= 0$ and $F(w):=|w|$ for any $w\in N$.  
Clearly, if the sequence $(w_{n})_{n\in \N}$ in $M$ converges to $0_{M}$, either it is eventually constant $0_{M}$ or lists all reals in $[0,1]$.  The latter case is impossible by Theorem \ref{cant2}.  
Hence, $F$ is sequentially continuous at $0_{M}$, but not continuous at $0_{M}$.  To show that $F$ is (sequentially) continuous at $w\ne 0_{M}$, consider the formula $|\frac{1}{2^{|w|}}-\frac{1}{2^{|v|}}|=d(v, w)<\frac{1}{2^{N}}$; the latter is false for $N\geq |w|+2$ and any $v \ne_{M}0_{M}$.  Thus, the following formula is (vacuously) true: 
\[\textstyle
(\forall k\in \N)(\exists N\in \N)(\forall v\in B_{d}^{M}(w, \frac{1}{2^{N}}))(|F(w)-F(v)|< \frac{1}{2^{k}}).
\]
i.e.\ $F$ is continuous at $w\ne_{M}0_{M}$, with a (kind of) modulus of continuity given.
Applying item \eqref{itema} (or item \eqref{itemc}-\eqref{itemd}), we obtain a contradiction as $F$ is clearly unbounded on $M$.
This contradiction yields $\NBI_{[0,1]}$ and the same for item \eqref{itemb} as $F$ is not (uniformly) continuous.  

\smallskip

Fourth, to obtain $\NBI_{[0,1]}$ from item \eqref{iteme}, suppose again the former is false and $Y:[0,1]\di \R$ and $(M, d)$ are as above.  
Define $C_{n}:= \{x\in N: |x|> n+1 \}$ and note that this set is non-empty (as $Y$ is a surjection) but satisfies $\cap_{n} C_{n}=\emptyset$.
Item~\eqref{iteme} now yields a contradiction if we can show that $C_{n}$ is sequentially closed.  To the latter end,  let $(w_{k})_{k\in \N}$ be a sequence in $C_{n}$ with limit $w\in M$.  
In case $w=_{M}0_{M}$, we make the same observation as in the third paragraph: either the sequence $(w_{k})_{k\in \N}$ is eventually constant $0_{M}$ or enumerates the reals in $[0,1]$.  Both are impossible, i.e.\ this case does not occur.  In case $w\ne 0_{M}$, we have 
\[\textstyle
(\forall k\in \N)(\exists N\in \N)(\forall n\geq N)(|\frac{1}{2^{|w|}}-\frac{1}{2^{|w_{n}|}}| =d(w, w_{n})<\frac{1}{2^{k}}), 
\]
which is only possible if $(w_{n})_{n\in \N}$ is eventually constant $w$.  In this case of course, $w\in C_{n}$, i.e.\ $C_{n}$ is sequentially closed, and $\eqref{iteme}\di\NBI_{[0,1]}$ follows. 
Regarding item~\eqref{itemf}, suppose $(M, d)$ is separable, i.e.\ there is a sequence $(w_{n})_{n\in \N}$ such that 
\be\label{tiops}\textstyle
(\forall w\in M, k\in \N)(\exists n\in \N)( |\frac{1}{2^{|w|}}-\frac{1}{2^{|w_{n}|}}|=d(w, w_{n})<\frac{1}{2^{k}}).
\ee
As in the above, for $w\ne_{M} 0_{M}$ and $k_{0}=|w|+2$, the formula $d(w, w_{n})<\frac{1}{2^{k_{0}}}$ is false for any $n\in \N$, i.e.\ we also obtain a contradiction in this case, yielding $\NBI_{[0,1]}$. 

\smallskip

Fifth, for the sentences between items \eqref{itemf} and \eqref{itemg}, $(M, d)$ is also complete and (strongly countably) compact, which is proved in (exactly) the same way as in the second paragraph: any ball around $0_{M}$ covers `most' of $M$; to show that $(M, d)$ is complete, let $(w_{n})_{n\in \N}$ be a Cauchy sequence, i.e.\ we have
\[\textstyle
(\forall k\in \N)(\exists N\in \N)(\forall n, m\geq N)( d(w_{n}, w_{m})<\frac{1}{2^{k}}).  
\]
Then $(w_{n})_{n\in \N}$ is either eventually constant or enumerates all reals in $[0,1]$.  
The latter is impossible by Theorem \ref{cant2}, i.e.\ $(w_{n})_{n\in \N}$ converges to some $w\in M$.
Note that a continuous function is trivially sequentially continuous.  

\smallskip

Sixth, to obtain $\NBI_{[0,1]}$ from item \eqref{itemg} and higher, recall the set $N:=\{ w^{1^{*}}: (\forall i<|w|)(Y(w(i))=i)\}$ and consider $(N, d)$, which is a metric space in the same way as for $(M, d)$.
To show that $(N, d)$ is sequentially compact, let $(w_{n})_{n\in \N}$ be a sequence in $N$.  In case $(\forall n\in \N)( |w_{n}|<m)$ for some $m\in \N$, then $(w_{n})_{n\in \N}$ contains at most $m$ different
elements, as $Y$ is an injection.  The pigeon hole principle now implies that (at least) one $w_{n_{0}}$ occurs infinitely often in $(w_{n})_{n\in \N}$, yielding an obviously convergent sub-sequence.  
In case $(\forall  m\in \N)(\exists n\in \N)( |w_{n}|\geq m)$, the sequence $(w_{n})_{n\in \N}$ enumerates the reals in $[0,1]$ (as $Y$ is a bijection), which is impossible by Theorem \ref{cant2}.
Thus, $(N, d)$ is a sequentially compact space; the function $G:N\di \R$ defined as $G(u)=|u|$ is continuous (in the same way as for $F$ above) but not bounded.  
This contradiction establishes that item \eqref{itemg} implies $\NBI_{[0,1]}$, and the same for items \eqref{itemgg}-\eqref{itemi}.
For item~\eqref{modulus}, the function $H(x,k):= \frac{1}{2^{|x|+k+2}}$ is a modulus of continuity for $G$.  

\smallskip

Seventh, for item \eqref{diniitem}, assume again $\neg\NBI_{[0,1]}$ and define $G_{n}(w)$ as $|w|$ in case $|w|\leq n$, and $0$ otherwise.  
As for $G$ above, $G_{n}$ is continuous and $\lim_{n\di \infty}G_{n}(w)=G(w)$ for $x\in N$. 
Since $G_{n}\leq G_{n+1}$ on $N$, item \eqref{diniitem} implies that the convergence is uniform, i.e.\ we have
\be\label{valse}\textstyle
(\forall k\in \N)(\exists m\in \N)(\forall w\in N)(\forall n\geq m)( |G_{n}(w)-G(w)|<\frac{1}{2^{k}}),
\ee
which is clearly false.  Indeed, take $k=1$ and let $m_{1}\in \N$ be as in \eqref{valse}.  Since $Y$ is surjective, $\IND_{1}$ provides $w_{1}\in N$ of length $m_{1}+1$, yielding $|G(w_{1})-G_{m_{1}}(w_{1})|=|(m_{1}+1)-0|>\frac{1}{2}$, contradicting \eqref{valse} and thus $\NBI_{[0,1]}$ follows from item \eqref{diniitem}.  For item \eqref{equi}, $(G_{n})_{n\in \N}$ is equicontinuous by the previous, but not uniformly equicontinuous, just like for item \eqref{diniitem} using a variation of \eqref{valse}.  For item \eqref{pinker}, the function $J(w):=\frac{1}{2^{|w|}} $ is continuous on $N$ in the same way as for $F, G$.  
However, assuming $\neg\NBI_{[0,1]}$, $J$ becomes arbitrarily small on $N$, contradicting item \eqref{pinker}. 
For item \eqref{AA}, define $J_{n}(w)$ as $J(w)$ if $|w|\leq n$, and $1$ otherwise.  Similar to the previous, $J_{n}$ converges to $J$, but not uniformly, i.e.\ item \eqref{AA} also implies $\NBI_{[0,1]}$. 
 
\smallskip

For item \eqref{itemlast2}, note that $O_{n}:= \{w\in N: |w|=n\} $ is open as $B_{d}^{M}(v, \frac{1}{2^{n+2}})\subset O_{n}$ in case $v\in O_{n}$.
Then $\cup_{n\in \N}O_{n}$ covers $N$, assuming $N$ (and $\neg\NBI_{[0,1]}$) as above.
However, there clearly is no finite sub-covering.  

\smallskip

Finally, for items \eqref{itemii}-\eqref{itemlast}, the above proof for items \eqref{iteme}-\eqref{itemf} goes through without modification. 
For item \eqref{itemiii}, note that $N$ is an infinite set in $(N, d)$ without limit point.  
The final sentence speaks for itself: one uses $(\exists^{3})$ and $(\mu^{2})$ to obtain a modulus of continuity.  
For $\eps=1$, the latter yields an uncountable covering, which has finite sub-covering assuming $(\exists^{3})$ by \cite{dagsamV}*{Theorem 4.1}.  
This immediately yields an upper bound while the supremum and maximum are obtained using the usual interval-halving technique using $(\exists^{3})$.
\end{proof}
We could restrict item \eqref{itemlast2} to \emph{R2-open} sets (\cites{dagsamVII, dagsamXV}), where the latter are open sets such that $x\in U$ implies $B(x, h_{U}(x))\subset U$ with the notation of Def.\ \ref{bchar}.

\subsection{Variations on a theme}\label{refi}
Lest the reader believe that third-order metric spaces are somehow irredeemable, we show that certain (very specific) variations 
of the items in Theorem \ref{klon} are provable in rather weak systems, sometimes assuming countable choice as in $\QFAC^{0,1}$ (Theorems \ref{typemon} and \ref{zlonk}).  We also show 
that certain items in Theorem \ref{klon} are just very hard to prove by deriving some of the new `Big' systems from \cite{dagsamXI, samBIG3, dagsamX, samBIG}, namely the Jordan decomposition theorem and the uncountability of $\R$ as in $\NIN_{[0,1]}$ (Theorem \ref{zionk}).

\smallskip

First, we establish the following theorem, which suggests a strong need for open sets as in Def.\ \ref{bchar} if we wish to prove basic properties of metric spaces in the base theory, potentially extended with the Big Five.
The fourth item should be contrasted with item \eqref{iteme} in Theorem \ref{klon}.  Many variations of the below results are of course possible based on the associated second-order results.   
\begin{thm}[$\RCAo$]\label{typemon}~
\begin{enumerate}
\renewcommand{\theenumi}{\alph{enumi}}
\item For strongly countably open $(M, d)$, a continuous $F:M\di \R$ is bounded.  
\item Dini's theorem for strongly countably-compact $(M, d)$.  
\item Pincherle's theorem for strongly countably-compact $(M, d)$.  
\item A metric space $(M, d)$ with the Cantor intersection property, is strongly countably-compact.  
\item The following are equivalent:
\begin{itemize}
\item[(e.1)]  weak K\"onig's lemma $\WKL_{0}$,
\item[(e.2)] for any weakly complete and \textbf{effectively} totally bounded metric space $(M, d)$ with $M\subset [0,1]$, a continuous $F:M\di \R$ is bounded above, 
\item[(e.3)] the previous item for sequentially continuous functions. 
\end{itemize}
\item The following are equivalent.
\begin{itemize}
\item[(f.1)]  arithmetical comprehension $\ACA_{0}$.
\item[(f.2)] any weakly complete and \textbf{effectively} totally bounded metric space $(M, d)$ with $M\subset [0,1]$, is sequentially compact. 
\end{itemize}
\end{enumerate}
\end{thm}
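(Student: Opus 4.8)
The plan is to treat the six items mostly independently, leaning on known second-order Reverse Mathematics results for the coded counterparts and on the crucial fact that the hypotheses now involve \emph{open sets} in the sense of Def.~\ref{bchar} (equivalently, lower semi-continuous height functions), which carry just enough second-order content to resurrect the classical arguments. For items (a)--(d) I would argue directly in $\RCAo$. For item (a): given a strongly countably-compact $(M,d)$ and continuous $F:M\di \R$, the preimages $O_n:=\{x\in M: |F(x)|<n\}$ form a sequence of open sets (here continuity of $F$ and the $h_U$-representation of open sets let us \emph{exhibit} the height functions $h_{O_n}$, e.g.\ $h_{O_n}(x):=\max(0, n-|F(x)|)$ truncated to $[0,1]$) covering $M$; strong countable-compactness yields a finite subcover, hence a bound. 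Item (b) (Dini) and item (c) (Pincherle) follow the same template: from a monotone sequence $F_k\to F$ of continuous functions, set $O_{k}^{\eps}:=\{x: |F(x)-F_k(x)|<\eps\}$ (open, with a computable height), note these cover $M$ and are increasing by monotonicity, extract a finite — hence by monotonicity a single — index, giving uniform convergence; Pincherle is the special case $F_k \equiv 1/2^{k}$ compared against a positive continuous $F$, or more directly the cover $\{x:F(x)>1/2^k\}$. Item (d) is the observation that given a decreasing sequence of nonempty closed sets $C_n$ with empty intersection, the complements $M\setminus C_n$ are an increasing open cover of $M$ with no finite subcover, contradicting strong countable-compactness; conversely one must check the Cantor intersection property is being used in the stated direction, so really (d) asserts: Cantor intersection property $\Rightarrow$ strongly countably-compact, which is the contrapositive of the previous sentence — I would phrase it as: if $(O_n)$ is an open cover with no finite subcover, then $C_n:=M\setminus(\cup_{i\le n}O_i)$ is a decreasing sequence of nonempty closed sets with empty intersection.

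For item (e), the forward direction $\WKL_0 \Rightarrow$ (e.2) is the textbook proof: an effectively totally bounded, weakly complete $(M,d)$ with $M\subseteq[0,1]$ is essentially a coded compact metric space (the effective total boundedness provides exactly the sequence of finite $1/2^k$-nets that a code supplies, and weak completeness handles the effectively Cauchy sequences), so $F$ becomes — via Theorem \ref{floppi} and the standard embedding — a continuous function on a coded compact space, and boundedness above is a known consequence of $\WKL_0$ (\cite{simpson2}*{IV.2.2}, \cite{browner}*{Theorem 4.1}). Since (e.2)$\Rightarrow$(e.3) is trivial (continuous implies sequentially continuous), the work is the reversal (e.3)$\Rightarrow$ $\WKL_0$: here I would adapt the standard construction showing a bounded-above principle fails on $[0,1]$ without $\WKL_0$, by coding an infinite binary tree with no path into an effectively totally bounded $M\subseteq[0,1]$ on which an (explicitly constructed, sequentially continuous) function is unbounded; concretely, embed $2^{<\N}\cap T$ at dyadic-like locations and let $F$ grow with the length of the node, using failure of $\WKL_0$ to see $M$ is "compact-looking" (weakly complete, effectively totally bounded) yet $F$ escapes every bound. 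Item (f) is the analogue with $\ACA_0$: (f.2)$\Rightarrow$ $\ACA_0$ by building, from an injection-with-no-range witness, an effectively totally bounded weakly complete $M\subseteq[0,1]$ containing a sequence with no convergent subsequence in $M$ (the missing range point is exactly the missing limit); conversely $\ACA_0 \Rightarrow$ (f.2) because under $\ACA_0$ one has the range of any function, hence sequential compactness for coded compact metric spaces (\cite{simpson2}*{III.2}), and effective total boundedness plus weak completeness again let us treat $(M,d)$ as such a coded space.

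The main obstacle I expect is in items (e) and (f): making precise that "weakly complete $+$ effectively totally bounded $+$ $M\subseteq[0,1]$" is, provably in $\RCAo$, inter-translatable with the second-order notion of a (code for a) complete separable compact metric space, so that the large body of second-order RM about $\WKL_0$ and $\ACA_0$ can be imported. The forward directions need that an effective total boundedness witness $(w_n)_{n\in\N}$ yields a genuine code (a countable dense subset: the union of the $w_n$'s, with rational distances approximated — care is needed since $d$ is third-order, so one works with rational approximations to $d(w_i,w_j)$, using $(\exists^2)$-free $\Delta^0_1$ manipulations), and that weak completeness matches the "every point is a limit of an effectively Cauchy sequence from the dense set" clause. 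The reversals need the dual: that the separating-halting-set-style or König-tree-style counterexamples can be carried out \emph{with $M\subseteq[0,1]$ and with honest third-order $d$ and $F$} rather than codes, and that the function $F$ one writes down is verifiably (sequentially) continuous in $\RCAo$ — the latter is exactly the kind of delicate epsilon-delta verification done for $F$ and $G$ in the proof of Theorem \ref{klon}, so I would model those verifications on that proof. The remaining items (a)--(d) I expect to be routine once the open-set-as-$h_U$ bookkeeping is set up.
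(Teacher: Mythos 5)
Your treatment of items (a)--(d) is essentially the paper's: the open covers $\{x\in M:|F(x)|<n\}$, $\{x\in M:|F(x)-F_{n}(x)|<\frac{1}{2^{k}}\}$, $\{x\in M:F(x)>\frac{1}{2^{n}}\}$ (each exhibited as a set in the sense of Def.~\ref{bchar} via a height function built from $F$), and for (d) the complements of the finite unions $\cup_{i\leq n}O_{i}$, are exactly what the paper uses, so that part is fine.

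For items (e) and (f) there is a genuine gap. First, your ``trivial'' implication points the wrong way: since continuity implies sequential continuity, (e.3) quantifies over the \emph{larger} function class, so (e.3)$\Rightarrow$(e.2) is the trivial direction, while (e.2)$\Rightarrow$(e.3) is not available; hence your cycle never produces $\WKL_{0}\Rightarrow$(e.3), which is precisely the delicate part. Second, the coding route cannot deliver it even in principle: a merely sequentially continuous $F$ need not have an RM-code, and $\WKL_{0}$ does not prove that sequential continuity implies continuity (this requires choice or $(\exists^{2})$, cf.\ \cite{kohlenbach2}). Even for (e.2), Theorem \ref{floppi} only provides codes for functions continuous with respect to the \emph{standard} metric on $[0,1]$ or $2^{\N}$, not with respect to an arbitrary third-order metric $d$ on $M\subset[0,1]$ (the spaces in Theorem \ref{klon} show how different $d$ can be), so ``via Theorem \ref{floppi} and the standard embedding'' asserts an unproven generalisation of Kohlenbach's theorem rather than a routine translation. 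The paper sidesteps all of this: for the downward direction it uses the case split $(\exists^{2})\vee\neg(\exists^{2})$ of Remark \ref{LEM} --- if $\neg(\exists^{2})$, every function is continuous and the known $[0,1]$-result from \cite{dagsamXIV} applies; if $(\exists^{2})$, one proves $(M,d)$ sequentially compact directly from effective total boundedness plus weak completeness, replaces the quantifier ``$(\exists x\in M)(F(x)>n)$'' by a quantifier over the countable net, applies $\QFAC^{0,0}$ (included in $\RCAo$), and contradicts (sequential) continuity at the limit of a convergent subsequence. For the reversals the paper simply takes $M=[0,1]$, which is weakly complete and effectively totally bounded in $\RCAo$, and cites \cite{dagsamXIV}*{Theorem 2.8} and \cite{kohlenbach2}*{Prop.\ 3.6} for (e), and the Bolzano--Weierstrass/$\ACA_{0}$ equivalence for (f); your bespoke tree and injection constructions are both unnecessary and unverified (in particular, for (f) one must check that the missing limit does not destroy weak completeness, an issue that taking $M=[0,1]$ avoids entirely).
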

\begin{proof}
For the first item, since $F$ is continuous, the set $E_{n}:= \{ x\in M: |F(x)|>n \}$ is open and exists in the sense of Def.\ \ref{bchar}. Since $\cup_{n\in \N}E_{n}$ covers $(M, d)$, there is a finite sub-covering $\cup_{n\leq n_{0}}E_{n}$ for some $n_{0}\in \N$, implying $|F(x)|\leq n_{0}+1$ for all $x\in M$, i.e.\ $F$ is bounded as required.  

\smallskip

For the second item, let $F, F_{n}$ be as in Dini's theorem and define $G_{n}(w):=F(w)-F_{n}(w)$.  
Now fix $k\in \N$ and define $E_{n}:= \{w\in M:  G_{n}(w)<\frac{1}{2^{k}}   \}$.  The latter yields a countable open covering and 
one obtains uniform convergence from any finite sub-covering. 
For the third item, fix $F:M\di \R^{+}$ and define $E_{n}:=\{w\in M: F(w)>\frac{1}{2^{n}} \}  $. 
The proof proceeds as for the previous items. 

\smallskip

For the fourth item, this amounts to a manipulation of definitions.  For the fifth item, that (e.2) and (e.3) imply $\WKL_{0}$ is immediate by \cite{dagsamXIV}*{Theorem 2.8} for $M=[0,1]$ and \cite{kohlenbach2}*{Prop.\ 3.6}.   
For the downward implication, fix $F:M\di \R$ for $M\subset [0,1]$ as in item (e.2).  
In case $\neg(\exists^{2})$, all functions on $\R$ are continuous by \cite{kohlenbach2}*{Prop.~3.12}.  
By \cite{dagsamXIV}*{Theorem~2.8}, all (continuous) $[0,1]\di \R$-functions are bounded.  Since we may (also) view $F$ as a (continuous) function from reals to reals, $F$ is bounded on $[0,1]$ and hence $M$, i.e.\ this case is finished. 

\smallskip
 
In case $(\exists^{2})$, we follow the well-known proof to show that $(M, d)$ is sequentially compact.  Indeed, for a sequence $(x_{n})_{n\in \N}$ in $M$, define a sub-sequence as follows: $M$ can be covered by a finite number of balls with radius $1/2^{k}$ with $k=1$.  Find a ball with infinitely many elements of $(x_{n})_{n\in \N}$ inside (which can be done explicitly using $(\exists^{2})$) and choose $x_{n_{0}}$ in this ball to define $y_{0}:=x_{n_{0}}$. 
Now repeat the previous steps for $k>1$ and note that the resulting sequence is effectively Cauchy and hence convergent (by the assumptions on $M$).  
Hence, $(M, d)$ is sequentially compact and suppose $F:M\di \R$ is unbounded, i.e.\ $(\forall n\in \N)\underline{(\exists x\in M)}(F(x)>n)$.  
It is now important to note that the underlined quantifier can be replaced by a quantifier over $\N$ using the sequence $(w_{n})_{n\in \N}$ provided by $M$ being effectively totally bounded.  
Applying $\QFAC^{0, 0}$, included in $\RCAo$, there is a sequence $(x_{n})_{n\in \N}$ such that $|F(x_{n})|>n$.  
This sequence has a convergent sub-sequence, say with limit $y$, and $F$ is not continuous at $y$, a contradiction.  Thus, $F$ is bounded for both disjuncts of $(\exists^{2})\vee \neg(\exists^{2})$.  The equivalence involving $\ACA_{0}$ has a similar proof.  
\end{proof}
As emphasised in bold in the theorem, the final part of the proof seems to crucially depend on \emph{effective} totally boundedness.  
Indeed, by the first part of Theorem~\ref{klon}, item (e.3) of Theorem \ref{typemon} with `effectively' omitted, implies $\NBI_{[0,1]}$.
In other words, the equivalences in Theorem \ref{typemon} do not seem robust.  

\smallskip

Secondly, we show that certain items from Theorem \ref{klon} fit nicely with RM, assuming an extended base theory.
Other items turn out to be connected to the `new' Big systems studied in \cite{dagsamXI, samBIG, samBIG2}. 

\smallskip

We now show that certain items from Theorem \ref{klon} are provable assuming countable choice as in $\QFAC^{0,1}$.  
Thus, these items do not imply $\NIN_{[0,1]}$ as the latter is not provable in $\Z_{2}^{\omega}+\QFAC^{0,1}$.  
The third item should be contrasted with \cite{simpson2}*{III.2}.
Many results in RM do not go through in the absence of $\QFAC^{0,1}$, as studied at length in \cites{dagsamVII, dagsamV}.
\begin{thm}[$\RCAo+\QFAC^{0,1}$]\label{zlonk} 
The following are provable for $(M, d)$ any metric space with $M\subset \R$.   
\begin{itemize}
\item Items \eqref{itemg}, \eqref{itemgg}, \eqref{diniitem}, \eqref{equi}, \eqref{pinker}, \eqref{itemlast2}, \eqref{itemlast}, and \eqref{itemiii} from Theorem \ref{klon}.
\item The following are equivalent:
\begin{itemize}
\item  weak K\"onig's lemma $\WKL_{0}$,
\item the unit interval is strongly countably-compact.
\end{itemize}
\item The following are equivalent:
\begin{itemize}
\item  arithmetical comprehension $\ACA_{0}$,
\item a weakly complete and \textbf{effectively} totally bounded $(M, d)$ with $M\subset [0,1]$ is limit point compact.
\end{itemize}
\end{itemize}
\end{thm}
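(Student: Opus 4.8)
The plan is to prove the three bullets separately, in each case running the standard textbook argument but routing every appeal to ``choose a point (or a sequence) in $M$'' through $\QFAC^{0,1}$. The key observation is that the predicates that occur --- ``$x\in M$'' (i.e.\ $h_{M}(x)>0$), ``$|F(x)|>n$'', ``$d(x,y)<2^{-N}\wedge|F(x)-F(y)|\geq 2^{-k}$'', ``$F(x)<2^{-k}$'', ``$x\in C$'' for a (sequentially) closed $C$ as in Def.~\ref{bchar}, ``$x\neq x'$'' --- are all $\Sigma^{0}_{1}$ in the real-number outputs of the third-order data, so after contracting their numerical existential quantifiers they have exactly the quantifier-free-modulo-type-$2$-parameters shape demanded by $\QFAC^{0,1}$, and the required witnessing functional exists in $\RCAo$ by $\lambda$-abstraction over a decidable matrix.

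For the first bullet I would handle items \eqref{itemg}, \eqref{itemgg}, \eqref{diniitem}, \eqref{equi}, \eqref{pinker} by one scheme: negate the conclusion; read off the resulting $\forall$--$\exists$-over-$M$ statement; apply $\QFAC^{0,1}$ to get a sequence (or a pair/triple of sequences) in $M$ witnessing the failure; apply sequential compactness to extract a convergent subsequence with limit $y\in M$; and contradict (sequential) continuity of the function(s) at $y$ --- using monotonicity in the Dini case and the fact that equicontinuity at $y$ is uniform over the other index in item \eqref{equi}. Items \eqref{itemlast} and \eqref{itemiii} fit the same scheme: for \eqref{itemlast}, $\QFAC^{0,1}$ picks $x_{n}\in C_{n}$, and a convergent subsequence has a limit lying in each $C_{n_{0}}$ by sequential closedness; for \eqref{itemiii}, $\QFAC^{0,1}$ applied to ``$C$ is infinite'' yields for each $N$ an $(N{+}1)$-tuple of distinct elements of $C$, from which a single sequence of distinct elements of $C$ is built by a bounded recursion, and its limit under sequential compactness is a limit point of $C$. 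Item \eqref{itemlast2} is the one case where the naive matrix ``$x\notin O_{n}$'' is only $\Pi^{0}_{1}$; here I would invoke the case distinction $(\exists^{2})\vee\neg(\exists^{2})$ as in the proof of Theorem~\ref{typemon}: under $(\exists^{2})$ the predicate is decidable, so $\QFAC^{0,1}$ applies and a convergent subsequence meets the open set containing its limit, a contradiction; under $\neg(\exists^{2})$ all functions are continuous, and one argues that a sequentially compact $M\subset\R$ then carries enough structure (total boundedness together with an effective dense sequence recovered from $\QFAC^{0,1}$ and continuity) for the finite subcover to be produced directly.

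For the second bullet, ``$[0,1]$ strongly countably-compact $\to\WKL_{0}$'' is immediate: by Theorem~\ref{typemon}(a) every continuous $F:[0,1]\di\R$ is then bounded, which is equivalent to $\WKL_{0}$ over $\RCAo$ by \cite{dagsamXIV}*{Theorem~2.8}. Conversely, from $\WKL_{0}$ (and $\QFAC^{0,1}$) one rewrites each open $O_{n}$ of Def.~\ref{bchar} as a sequence of rational intervals with the same union --- using lower semi-continuity of $h_{O_{n}}$, and the case split on $(\exists^{2})$ where needed --- and then applies the Heine--Borel covering lemma, equivalent to $\WKL_{0}$ by \cite{simpson2}*{IV.1}, to pass from a finite subcover of the intervals to a finite subcover of the $O_{n}$. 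For the third bullet, ``$\ACA_{0}\to$ limit point compactness of weakly complete, effectively totally bounded $M\subset[0,1]$'' follows by chaining Theorem~\ref{typemon}(f) (which yields sequential compactness of such $M$) with item \eqref{itemiii} above; and ``limit point compactness $\to\ACA_{0}$'' refines the reverse direction of Theorem~\ref{typemon}(f): given an injection $f:\N\di\N$, one takes $M=\{\,\sum_{i\leq n}2^{-f(i)}3^{-i-1}:n\in\N\,\}$ --- which one checks is weakly complete, \textbf{effectively} totally bounded, a subset of $[0,1]$, and an infinite set --- so that a limit point of $M$ encodes $\range(f)$, whence $\ACA_{0}$.

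I expect the main obstacles to be the two places where the argument is not mere bookkeeping against Theorem~\ref{typemon}: first, making item \eqref{itemlast2} go through in the $\neg(\exists^{2})$ branch, since there the membership relation ``$x\notin O_{n}$'' is genuinely $\Pi^{0}_{1}$ and $\QFAC^{0,1}$ alone does not extract the required failing sequence, so one must exploit that $\neg(\exists^{2})$ renders every $h_{O_{n}}$ continuous and that $\QFAC^{0,1}$ then recovers enough uniformity; and second, the reverse implication ``limit point compactness $\to\ACA_{0}$'', where the encoding space $M$ has to be \textbf{effectively} totally bounded (not merely totally bounded) while still being infinite with a limit point that pins down an arithmetical comprehension instance --- this effectiveness constraint is precisely the delicate point flagged in bold in Theorem~\ref{typemon} and in the remarks following its proof.
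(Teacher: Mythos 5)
Your handling of the first bullet is essentially the paper's proof: negate the conclusion, apply $\QFAC^{0,1}$ to extract a witnessing sequence, use sequential compactness, and contradict (sequential) continuity at the limit; the paper treats items \eqref{itemlast} and \eqref{itemiii} exactly this way as well. Where you deviate, problems appear. For item \eqref{itemlast2} the paper makes no case distinction on $(\exists^{2})$: it applies $\QFAC^{0,1}$ directly to $(\forall n\in\N)(\exists x\in M)(x\notin\cup_{m\leq n}O_{m})$ and derives the contradiction from the fact that the limit of the convergent sub-sequence lies in some $O_{n_{0}}$. Your worry about the $\Pi^{0}_{1}$ shape of the matrix is not unreasonable, but the substitute you offer in the $\neg(\exists^{2})$ branch (``carries enough structure \dots\ for the finite subcover to be produced directly'') is not an argument, so as written that item is not proved. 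Likewise, for the second bullet the paper simply cites \cite{dagsamVII}*{Theorem 4.1}; your sketch of the direction from $\WKL_{0}$ --- rewriting each third-order open set as a union of rational intervals --- is precisely the nontrivial content of that cited theorem (especially in the $\neg(\exists^{2})$ case, where extracting such an interval representation is the whole difficulty), and one sentence does not recover it.

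The serious gap is your reversal in the third bullet. The space $M=\{\sum_{i\leq n}2^{-f(i)}3^{-i-1}:n\in\N\}$ is provably \emph{not} weakly complete: the sequence of partial sums is effectively Cauchy (its tail is at most $3^{-n}$, independently of $f$), yet it does not converge to any element of $M$ since every tail is strictly positive; hence the limit-point-compactness hypothesis cannot even be applied to $(M,d)$. Moreover, repairing this by adjoining the limit $y_{\infty}=\sum_{i}2^{-f(i)}3^{-i-1}$ gains nothing: $y_{\infty}$ is a real that $\RCAo$ computes outright from $f$ (the series converges effectively), so learning that it is a limit point does not yield $\range(f)$ --- the damping factors $3^{-i-1}$ destroy exactly the feature exploited in the classical Bolzano--Weierstrass reversal, where one uses $\sum_{i\leq n}2^{-f(i)}$ because the \emph{value} of that limit decides membership in $\range(f)$ and is not effectively available (and that undamped series cannot be used here, since weak completeness would again fail). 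There is also the unaddressed question of representing such an $M$, or an infinite subset of it, as a set in the sense of Definition \ref{bchar} without $(\exists^{2})$. The paper's reversal sidesteps all of this: it instantiates the principle with $M=[0,1]$ itself, which is trivially weakly complete and effectively totally bounded, and obtains $\ACA_{0}$ from the limit-point (Bolzano--Weierstrass) principle on the unit interval. Your forward direction for this bullet --- chaining Theorem \ref{typemon} with the argument for item \eqref{itemiii} --- does coincide with the paper's.
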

\begin{proof}
First of all, we prove item \eqref{itemg} from Theorem \ref{klon} in $\RCAo+\QFAC^{0,1}$. 
To this end, suppose the continuous function $F:M\di \R$ is unbounded, i.e.\ $(\forall n\in \N)(\exists w\in M)(|F(w)|>n)$.  
Applying $\QFAC^{0, 1}$, there is a sequence $(x_{n})_{n\in \N}$ such that $|F(w_{n})|>n$.  Since $(M, d)$ is assumed to be 
sequentially complete, let $(y_{n})_{n\in \N}$ be a convergent sub-sequence with limit $y\in M$.  Clearly, $F$ cannot be continuous at $y\in M$, a contradiction, which yields item \eqref{itemg}.   
Item \eqref{itemgg} is proved in the same way:  suppose $F$ is not uniformly continuous and apply $\QFAC^{0,1}$ to the latter statement to obtain a sequence.  
Then $F$ is not continuous at the limit of the convergent sub-sequence.  Items \eqref{diniitem}-\eqref{pinker} are proved in the same way.  
To prove item \eqref{itemlast2}, let $(O_{n})_{n\in \N}$ be a countable open covering of $M$ with $(\forall n\in \N)(\exists x\in M)( x\not \in \cup_{m\leq n}O_{m} )$.  
Apply $\QFAC^{0,1}$ to obtain a sequence $(x_{n})_{n\in \N}$, which has a convergent sub-sequence $(y_{n})_{n\in \N}$ by assumption, say with limit $y\in M$.  Then $y\in O_{n_{0}}$ for some $n_{0}\in \N$, which implies that $y_{n}$ is also eventually in $O_{n_{0}}$, a contradiction. 
To prove item~\eqref{itemlast}, let $(C_{n})_{n\in \N}$ be as in the sequential Cantor intersection property and apply $\QFAC^{0,1}$ to $(\forall n\in \N)(\exists x\in M)( x\in C_{n})$.  
The convergent sub-sequence has a limit $y\in \cap_{n\in \N} C_{n}$.  To prove item \eqref{itemiii}, let $X$ be an infinite set, i.e.\  $(\forall N\in \N)(\exists w^{1^{*}})(\forall i<|w|)(|w|=N\wedge w(i)\in X  )$. 
Now apply $\QFAC^{0,1}$ to obtain a sequence $(w_{n})_{n\in \N}$ in $X$.  Since $(M, d)$ is sequentially closed, the latter sequence has a convergent sub-sequence, the limit of which is a limit point of $X$.  

\smallskip

Secondly, the equivalence in the second item is proved in \cite{dagsamVII}*{Theorem 4.1}.  For the third item, the upwards implication is immediate for $M=[0,1]$.  
For the downwards implication, assume $(M, d)$ as in the final sub-item.  Theorem \ref{typemon} implies that $(M, d)$ is sequentially compact.  
As in the previous paragraph, an infinite set in $M$ now has a limit point.  
\end{proof}
A similar proof should go through for many of the other items in Theorem \ref{klon} and for $\QFAC^{0,1}$ replaced by $\NCC$ from \cite{dagsamIX}; the latter is provable in $\Z_{2}^{\Omega}$ while the former is not provable in $\ZF$. 

\smallskip

Secondly, the Jordan decomposition theorem is studied in \cite{dagsamXI, samBIG3} where various versions are shown to be equivalent to the enumeration principle for countable sets. 
Many equivalences exist for the following principle, elevating it to a new `Big' system, as shown in \cite{dagsamXI}.  
\begin{princ}[$\cocode_{0}$]
Let $A\subset [0,1]$ and $Y:[0,1]\di \N$ be such that $Y$ is injective on $A$. 
Then there is a sequence of reals $(x_{n})_{n\in \N}$ that includes $A$.   
\end{princ}
This principle is `explosive' in that $\ACAo+\cocode_{0}$ proves $\ATR_{0}$ and $\FIVE^{\omega}+\cocode_{0}$ proves $\SIX$ (see \cite{dagsamXI}*{\S4}).
As it turns out, the separability of metric spaces is similarly explosive.  
\begin{thm}[$\ACAo$]\label{zionk}~
\begin{itemize}
\item Item \eqref{itemf} or \eqref{itemii} from Theorem \ref{klon} implies $\cocode_{0}$.  
\item Item \eqref{itemf} or \eqref{itemii} for $M=[0,1]$ from Theorem \ref{klon} implies $\NIN_{[0,1]}$.  
\end{itemize}
\end{thm}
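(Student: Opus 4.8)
The plan is to treat all four cases at once — items \eqref{itemf} and \eqref{itemii} of Theorem \ref{klon}, each in its general form ``$M\subset\R$'' and in the specialised form ``$M=[0,1]$'' — by a single metric-space construction modelled on the proof of Theorem \ref{klon}, but with the function $Y$ there now only assumed injective (not bijective).

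For the first item, suppose $A\subseteq[0,1]$ and $Y\colon[0,1]\di\N$ is injective on $A$. Pick a real $0_{M}\notin[0,1]$, say $0_{M}:=2$, set $M:=A\cup\{0_{M}\}$ with $=_{M}$ being $=_{\R}$, and define $d\colon M^{2}\di\R$ by $d(0_{M},0_{M}):=0$, $d(0_{M},x):=d(x,0_{M}):=\frac{1}{2^{Y(x)}}$ for $x\in A$, and $d(x,y):=|\frac{1}{2^{Y(x)}}-\frac{1}{2^{Y(y)}}|$ for $x,y\in A$. Since $Y$ is injective on $A$ we have $d(x,y)=_{\R}0\asa x=_{M}y$, and the remaining clauses of Definition \ref{donkc} follow from the triangle inequality for $|\cdot|$; thus $(M,d)$ is a metric space with $M\subset\R$. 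The key feature is \emph{isolation}: for $x\in A$ one checks $B_{d}^{M}(x,\frac{1}{2^{Y(x)+2}})=\{x\}$, via a short case split on whether the competing point is $0_{M}$ (at distance $\frac{1}{2^{Y(x)}}$) or some $y\in A$ with $Y(y)\neq Y(x)$ (at distance $|\frac{1}{2^{Y(x)}}-\frac{1}{2^{Y(y)}}|\geq\frac{1}{2^{Y(x)+1}}$).

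The step I expect to be the main obstacle is showing, \emph{over $\ACAo$ alone}, that $(M,d)$ is countably-compact and sequentially compact. In the proof of Theorem \ref{klon} the analogous fact is obtained using $\IND_{1}$, which is not available over $\ACAo$ (it would require choosing and collecting the witnesses $f\in 2^{\N}$, which $(\exists^{2})$ cannot do). The way around this is the observation that, for a fixed level $i\in\N$, whether the (at most one) point $x\in A$ with $Y(x)=i$ lies in a ball $B_{d}^{M}(a,r)$ depends only on $i$, $r$ and $Y(a)$ — it holds iff $\frac{1}{2^{i}}<r$ when $a=0_{M}$, and iff $|\frac{1}{2^{i}}-\frac{1}{2^{Y(a)}}|<r$ when $a\in A$ — and is therefore decidable from $(\exists^{2})$ \emph{without exhibiting the point}. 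So, given a countable ball-covering $M\subseteq\bigcup_{n}B_{d}^{M}(a_{n},r_{n})$, one uses $(\exists^{2})$ to locate the least $n_{0}$ with $0_{M}\in B_{d}^{M}(a_{n_{0}},r_{n_{0}})$, reads off a natural number $n_{1}$ such that this single ball already covers $\{0_{M}\}$ together with all $x\in A$ having $Y(x)\geq n_{1}$, and for each $i<n_{1}$ sets $m(i)$ to be the least $n$ (if any) such that $B_{d}^{M}(a_{n},r_{n})$ contains the level-$i$ point; the finite family $\{n_{0}\}\cup\{m(i):i<n_{1}\}$ is then a sub-covering. Sequential compactness is similar: $(\exists^{2})$ decides the arithmetical condition ``some level is hit infinitely often'' for a given sequence, and otherwise one extracts a sub-sequence along which the $Y$-values are unbounded, which converges to $0_{M}$. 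Granting this, item \eqref{itemf} (resp.\ \eqref{itemii}) applies and yields a dense sequence $(z_{m})_{m}$ in $M$; by isolation, for each $x\in A$ there is $m$ with $z_{m}=_{\R}x$, so $(z_{m})_{m}$ is a sequence of reals including $A$. This is exactly $\cocode_{0}$.

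For the second item, assume towards a contradiction with $\NIN_{[0,1]}$ that some $Y\colon[0,1]\di\N$ is injective. Carry out the same construction on the set $M:=[0,1]$ itself, now letting the genuine point $0\in[0,1]$ play the role of $0_{M}$: put $d(0,x):=\frac{1}{2^{Y(x)}}$ and $d(x,y):=|\frac{1}{2^{Y(x)}}-\frac{1}{2^{Y(y)}}|$ for $x,y\in[0,1]\setminus\{0\}$. As above, $([0,1],d)$ is a metric space with underlying set exactly $[0,1]$, it is countably-compact and sequentially compact by the same $(\exists^{2})$-argument, and every $x\neq 0$ satisfies $B_{d}(x,\frac{1}{2^{Y(x)+2}})=\{x\}$. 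Applying item \eqref{itemf} (resp.\ \eqref{itemii}) for $M=[0,1]$ gives a dense sequence $(z_{m})_{m}$, and isolation forces $(z_{m})_{m}$ to list every real in $[0,1]\setminus\{0\}$, in particular every real in $[\tfrac12,1]$; passing to the first-occurrence sub-sequence produces a sequence of \emph{distinct} reals exhausting $[\tfrac12,1]$, contradicting Theorem \ref{cant2}. Hence no such $Y$ exists, i.e.\ $\NIN_{[0,1]}$ holds.
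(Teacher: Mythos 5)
Your proposal is correct and takes essentially the same route as the paper: the identical metric $d(x,y)=|2^{-Y(x)}-2^{-Y(y)}|$ with a distinguished point at distance $2^{-Y(x)}$ from $x$, the same verification of countable/sequential compactness, and the same isolation/density argument showing the separating sequence must list $A$ (resp.\ enumerate $[0,1]\setminus\{0\}$, contradicting Theorem \ref{cant2}). The only differences are cosmetic: you adjoin an external special point $0_{M}=2$ where the paper assumes $0\in A$, and you spell out the $(\exists^{2})$-based finite-subcover and distinct-enumeration details that the paper leaves implicit.
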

\begin{proof}
For the first item, let $Y:[0,1]\di \N$ be injective on $A\subset [0,1]$; without loss of generality, we may assume $0\in A$.  
Now define $d(x, y):= |\frac{1}{2^{Y(x)}}-\frac{1}{2^{Y(y)}}|$, $d(x,0)=d(0, x):=\frac{1}{2^{Y(x)}}$ for $x, y\ne 0$ and $d(0, 0):=0$. 
The metric space $(A, d)$ is countably-compact as $0\in B_{d}^{A}(x, r)$ implies $y\in B_{d}^{A}(x, r)$ for $y\in A$ with only finitely many exceptions (as $Y$ is injective on $A$).  
Similarly, $(A, d)$ is sequentially compact: in case a sequence $(z_{n})_{n\in \N}$ in $A$ has at most finitely many distinct elements, there is an obvious convergent/constant sub-sequence.  
Otherwise, $(z_{n})_{n\in \N}$ has a sub-sequence $(y_{n})_{n\in \N}$ such that $Y(y_{n})$ becomes arbitrary large with $n$ increasing; this sub-sequence is readily seen to converge to $0$.   

\smallskip

Now let $(x_{n})_{n\in\N}$ be the sequence provided by item \eqref{itemf} or \eqref{itemii} of Theorem~\ref{klon}, implying $(\forall x\in A)(\exists n\in \N)( d(x, x_{n})<\frac{1}{2^{Y(x)+1}})$ by taking $k=Y(x)+1$.  
The latter formula implies 
\be\label{jilogz}\textstyle
(\forall x\in A)(\exists n\in \N)(x\ne_{\R} 0\di  |\frac{1}{2^{Y(x)}}-\frac{1}{2^{Y(x_{n})}}|<_{\R}\frac{1}{2^{Y(x)+1}}) 
\ee
by definition.  Note that $x_{n}$ from \eqref{jilogz} cannot be $0$ by the definition of the metric $d$.  
Clearly, $|\frac{1}{2^{Y(x)}}-\frac{1}{2^{Y(x_{n})}}|<\frac{1}{2^{Y(x)+1}}$ is only possible if $Y(x)=Y(x_{n})$, implying $x=_{\R}x_{n}$. 
Hence, we have shown that $(x_{n})_{n\in \N}$ lists all reals in $A\setminus\{0\}$.
The same proof now yields the second item for $A=[0,1]$ as Theorem \ref{cant2} implies the reals cannot be enumerated. 
\end{proof}
In conclusion, the coding of metric spaces does distort the logical properties of basic properties of continuous functions on metric spaces by Theorem \ref{klon}.  
This is established by deriving $\NBI_{[0,1]}$ while noting that $\NIN_{[0,1]}$ generally does not follow by Theorem \ref{zlonk}.  
The latter also shows that in an enriched base theory, one can obtain `rather vanilla' RM.  
By contrast, other properties of metric spaces imply new `Big' systems, as is clear from Theorem \ref{zionk}.

\section{Foundational musings}\label{fouind}

\subsection{Thoughts on coding}
The results in this paper have implications for the coding of higher-order objects in second-order RM, as discussed in this section. 

\smallskip

First of all, our results shed new light on the following problem from \cite{fried5}*{p.\ 135}.
\begin{quote}
PROBLEM. [\dots] Show that 
Simpson's neighborhood condition coding of partial continuous functions
between complete separable metric spaces is ``optimal". 
\end{quote}
A coding is called \emph{optimal} in \cite{fried5} in case $\RCA_{0}$ can prove `as much as possible', i.e.\ as many as possible of the basic properties of the coding can be established in $\RCA_{0}$.  
Theorem \ref{klon} show that without separability, basic properties of continuous functions on compact metric spaces are no longer provable from second-order (comprehension) axioms.  
Thus, separability is an essential ingredient \emph{if} one wishes to study these matters using second-order arithmetic/axioms.  

\smallskip

Secondly, second-order (comprehension) axioms can establish many (third-order) theorems about continuous \emph{and} discontinuous functions
on the reals (see \cite{dagsamXIV, samBIG3}), assuming $\RCAo$.  
Hence, large parts of (third-order) real analysis can be developed using second-order comprehension axioms in a weak third-order background theory, namely $\RCAo$, using little-to-no-coding.  
The same does not hold for continuous functions on compact metric spaces by the above results.  In particular, Theorem \ref{typemon} suggests we have to choose a \emph{very specific} representation, namely `weakly complete and effectively totally bounded' to obtain third-order statements that are classified in the Big Five.  Indeed, Theorem \ref{klon} implies that many (most?) other variations are not provable from second-order (comprehension) axioms. 

\smallskip

In conclusion, our results show that separability is an essential ingredient \emph{if} one wishes to study these matters using second-order arithmetic/axioms.  
However, our results also show that this is a \emph{very specific} choice that is `non-standard' in the sense that many variations cannot be established using second-order arithmetic/axioms.

\subsection{Set theory and ordinary mathematics}
In this section, we explore a theme introduced in \cite{samBIG2}. 
Intuitively speaking, we collect evidence for a parallel between our results and some central results in set theory.
 Formulated slightly differently, one could say that interesting phenomena in set theory have `miniature versions' to be found in third-order arithmetic, or that the seeds for interesting phenomena in set theory can already be found in third-order arithmetic.

\smallskip

First of all, the cardinality of $\R$ is mercurial in nature: the famous work of G\"odel (\cite{goeset}) and Cohen (\cite{cohen1, cohen2}) shows that the \emph{Continuum Hypothesis} cannot be proved or disproved in $\ZFC$, i.e.\ Zermelo-Fraenkel set theory with $\AC$, the usual foundations of mathematics.  In particular, the exact cardinality of $\R$ cannot be established in $\ZFC$.  
A parallel observation in higher-order RM is that $\Z_{2}^{\omega}+\QFAC^{0,1}$ cannot prove that $\R$ is uncountable in the sense of there being no no injection from $\R$ to $\N$ (see \cite{dagsamX} for details).  
In a conclusion, the cardinality of $\R$ has a particularly mercurial nature, in both set theory and higher-order arithmetic.

\smallskip

Secondly, many standard results in mainstream mathematics are not provable in $\ZF$, i.e.\ $\ZFC$ with $\AC$ removed, as explored in great detail \cite{heerlijkheid}.
The absence of $\AC$ is even said to lead to \emph{disasters} in topology and analysis (see \cite{kermend}).  A parallel phenomenon was observed in \cites{dagsamVII, dagsamV}, namely 
that certain rather basic equivalences go through over $\RCAo+\QFAC^{0,1}$, but not over $\Z_{2}^{\omega}$.  

\smallskip

Examples include the equivalence between compactness results and local-global principles, which are intimately related according to Tao (\cite{taokejes}).  
In this light, it is fair to say that disasters happen in both set theory and higher-order arithmetic in the absence of $\AC$.
It should be noted that $\QFAC^{0,1}$ (not provable in $\ZF$) can be replaced by $\NCC$ from \cite{dagsamIX} (provable in $\Z_{2}^{\Omega}$) in the aforementioned. 

\smallskip

Thirdly, we discuss the essential role of $\AC$ in measure and integration theory, which leads to rather concrete parallel observations in higher-order arithmetic.
Indeed, the full pigeonhole principle for measure spaces is not provable in $\ZF$, which immediately follows from e.g.\ \cite{heerlijkheid}*{Diagram~3.4}.
A parallel phenomenon in higher-order arithmetic (see \cite{samBIG2}) is that even the restriction to closed sets, namely $\PHP_{[0,1]}$ cannot be proved in $\Z_{2}^{\omega}+\QFAC^{0,1}$ (but $\Z_{2}^{\Omega}$ suffices). 

\smallskip

A more `down to earth' observation pertains to the intuitions underlying the Riemann and Lesbesgue integral.
Intuitively, the integral of a non-negative function represents the area under the graph; thus, if the integral is zero, then this function must be zero for `most' reals.  
Now, $\AC$ is needed to establish this intuition for the Lebesgue integral (\cite{hahakaka}).
Similarly, \cite{samBIG2}*{Theorem 3.8} establishes the parallel observation that this intuition for the \emph{Riemann} integral cannot be proved in $\Z_{2}^{\omega}+\QFAC^{0,1}$ (but $\Z_{2}^{\Omega}$ suffices as usual).  

\smallskip

Fourth, the \emph{pointwise} equivalence between sequential and `epsilon-delta' continuity cannot be proved in $\ZF$ while $\RCAo+\QFAC^{0,1}$ suffices for functions on Baire space (see \cite{kohlenbach2}).  
A parallel observation is provided by (the proof of) Theorem \ref{klon}, namely that the following statement is not provable in $\Z_{2}^{\omega}$:
\begin{center}
\emph{for countably-compact $(M, d)$ and sequentially continuous $F:M\di \R$, $F$ is continuous on $M$.}
\end{center}
Thus, the \emph{global} equivalence between sequential and `epsilon-delta' continuity on metric spaces cannot be proved in $\Z_{2}^{\omega}$.  
In other words, the exact relation between sequential and `epsilon-delta' continuity is hard to pin down, both in set theory and third-order arithmetic. 

\begin{ack}\rm 
My research was supported by the \emph{Klaus Tschira Boost Fund} via the grant Projekt KT43.
The initial ideas for this paper were developed in my 2022 Habilitation thesis at TU Darmstadt (\cite{samhabil}) under the guidance of Ulrich Kohlenbach.  
We express our gratitude towards these persons and institutions.   
\end{ack}

\begin{bibdiv}
\begin{biblist}
\bib{diniberg2}{article}{
  author={Berger, Josef},
  author={Schuster, Peter},
  title={Classifying Dini's theorem},
  journal={Notre Dame J. Formal Logic},
  volume={47},
  date={2006},
  number={2},
  pages={253--262},
}

\bib{bish1}{book}{
  author={Bishop, Errett},
  title={Foundations of constructive analysis},
  publisher={McGraw-Hill},
  date={1967},
  pages={xiii+370},
}

\bib{brownphd}{book}{
  author={Brown, Douglas K.},
  title={Functional analysis in weak subsystems of second-order arithmetic},
  year={1987},
  publisher={PhD Thesis, The Pennsylvania State University, ProQuest LLC},
}

\bib{browner2}{article}{
  author={Brown, Douglas K.},
  title={Notions of closed subsets of a complete separable metric space in weak subsystems of second-order arithmetic},
  conference={ title={Logic and computation}, address={Pittsburgh, PA}, date={1987}, },
  book={ series={Contemp. Math.}, volume={106}, publisher={Amer. Math. Soc., Providence, RI}, },
  date={1990},
  pages={39--50},
}

\bib{browner}{article}{
  author={Brown, Douglas K.},
  title={Notions of compactness in weak subsystems of second order arithmetic},
  conference={ title={Reverse mathematics 2001}, },
  book={ series={Lect. Notes Log.}, volume={21}, publisher={Assoc. Symbol. Logic}, },
  date={2005},
  pages={47--66},
}

\bib{cohen1}{article}{
  author={Cohen, Paul},
  title={The independence of the continuum hypothesis},
  journal={Proc. Nat. Acad. Sci. U.S.A.},
  volume={50},
  date={1963},
  pages={1143--1148},
}

\bib{cohen2}{article}{
  author={Cohen, Paul},
  title={The independence of the continuum hypothesis. II},
  journal={Proc. Nat. Acad. Sci. U.S.A.},
  volume={51},
  date={1964},
  pages={105--110},
}

\bib{dinipi}{book}{
  author={U. {Dini}},
  title={{Fondamenti per la teorica delle funzioni di variabili reali}},
  year={1878},
  publisher={{Nistri, Pisa}},
}

\bib{dinipi2}{book}{
  author={U. {Dini}},
  title={{Grundlagen f\"ur eine Theorie der Functionen einer ver\"anderlichen reelen Gr\"osse}},
  year={1892},
  publisher={{Leipzig, B.G. Teubner}},
}

\bib{damurm}{book}{
  author={Dzhafarov, Damir D.},
  author={Mummert, Carl},
  title={Reverse Mathematics: Problems, Reductions, and Proofs},
  publisher={Springer Cham},
  date={2022},
  pages={xix, 488},
}

\bib{fried5}{article}{
  author={Friedman, Harvey},
  author={Simpson, Stephen G.},
  title={Issues and problems in reverse mathematics},
  conference={ title={Computability theory and its applications}, address={Boulder, CO}, date={1999}, },
  book={ series={Contemp. Math.}, volume={257}, publisher={Amer. Math. Soc.}, place={Providence, RI}, },
  date={2000},
  pages={127--144},
}

\bib{goeset}{article}{
  author={G\"odel, Kurt},
  title={The Consistency of the Axiom of Choice and of the Generalized Continuum-Hypothesis},
  journal={Proceedings of the National Academy of Science},
  year={1938},
  volume={24},
  number={12},
  pages={556-557},
}

\bib{grayk}{article}{
  author={Gray, Robert},
  title={Georg Cantor and transcendental numbers},
  journal={Amer. Math. Monthly},
  volume={101},
  date={1994},
  number={9},
  pages={819--832},
}

\bib{heerlijkheid}{book}{
  author={Herrlich, Horst},
  title={Axiom of choice},
  series={Lecture Notes in Mathematics},
  volume={1876},
  publisher={Springer},
  date={2006},
  pages={xiv+194},
}

\bib{hunterphd}{book}{
  author={Hunter, James},
  title={Higher-order reverse topology},
  note={Thesis (Ph.D.)--The University of Wisconsin - Madison},
  publisher={ProQuest LLC, Ann Arbor, MI},
  date={2008},
  pages={97},
}

\bib{hahakaka}{article}{
  author={Kanovei, Vladimir},
  author={Katz, Mikhail},
  title={A positive function with vanishing Lebesgue integral in Zermelo-Fraenkel set theory},
  journal={Real Anal. Exchange},
  volume={42},
  date={2017},
  number={2},
  pages={385--390},
}

\bib{kermend}{article}{
  author={Keremedis, Kyriakos},
  title={Disasters in topology without the axiom of choice},
  journal={Arch. Math. Logic},
  volume={40},
  date={2001},
  number={8},
}

\bib{kohlenbach4}{article}{
  author={Kohlenbach, Ulrich},
  title={Foundational and mathematical uses of higher types},
  conference={ title={Reflections on the foundations of mathematics}, },
  book={ series={Lect. Notes Log.}, volume={15}, publisher={ASL}, },
  date={2002},
  pages={92--116},
}

\bib{kohlenbach2}{article}{
  author={Kohlenbach, Ulrich},
  title={Higher order reverse mathematics},
  conference={ title={Reverse mathematics 2001}, },
  book={ series={Lect. Notes Log.}, volume={21}, publisher={ASL}, },
  date={2005},
  pages={281--295},
}

\bib{kohlenbach3}{book}{
  author={Kohlenbach, Ulrich},
  title={Applied proof theory: proof interpretations and their use in mathematics},
  series={Springer Monographs in Mathematics},
  publisher={Springer-Verlag},
  place={Berlin},
  date={2008},
  pages={xx+532},
}

\bib{kopo}{article}{
  author={Kohlenbach, Ulrich},
  title={On the logical analysis of proofs based on nonseparable Hilbert space theory},
  conference={ title={Proofs, categories and computations}, },
  book={ series={Tributes}, volume={13}, publisher={Coll. Publ., London}, },
  isbn={978-1-84890-012-7},
  date={2010},
  pages={131--143},
}

\bib{longmann}{book}{
  author={Longley, John},
  author={Normann, Dag},
  title={Higher-order Computability},
  year={2015},
  publisher={Springer},
  series={Theory and Applications of Computability},
}

\bib{neeman}{article}{
  author={Neeman, Itay},
  title={Necessary use of $\Sigma ^1_1$ induction in a reversal},
  journal={J. Symbolic Logic},
  volume={76},
  date={2011},
  number={2},
  pages={561--574},
}

\bib{dagsam}{article}{
  author={Normann, Dag},
  author={Sanders, Sam},
  title={Nonstandard Analysis, Computability Theory, and their connections},
  journal={Journal of Symbolic Logic},
  volume={84},
  number={4},
  pages={1422--1465},
  date={2019},
}

\bib{dagsamII}{article}{
  author={Normann, Dag},
  author={Sanders, Sam},
  title={The strength of compactness in Computability Theory and Nonstandard Analysis},
  journal={Annals of Pure and Applied Logic, Article 102710},
  volume={170},
  number={11},
  date={2019},
}

\bib{dagsamIII}{article}{
  author={Normann, Dag},
  author={Sanders, Sam},
  title={On the mathematical and foundational significance of the uncountable},
  journal={Journal of Mathematical Logic, \url {https://doi.org/10.1142/S0219061319500016}},
  date={2019},
  volume = {19},
number = {01},
pages = {1950001},

}

\bib{dagsamVI}{article}{
  author={Normann, Dag},
  author={Sanders, Sam},
  title={The Vitali covering theorem in Reverse Mathematics and computability theory},
  journal={Submitted, arXiv: \url {https://arxiv.org/abs/1902.02756}},
  date={2019},
}

\bib{dagsamVII}{article}{
  author={Normann, Dag},
  author={Sanders, Sam},
  title={Open sets in Reverse Mathematics and Computability Theory},
  journal={Journal of Logic and Computation},
  volume={30},
  number={8},
  date={2020},
  pages={pp.\ 40},
}

\bib{dagsamV}{article}{
  author={Normann, Dag},
  author={Sanders, Sam},
  title={Pincherle's theorem in reverse mathematics and computability theory},
  journal={Ann. Pure Appl. Logic},
  volume={171},
  date={2020},
  number={5},
  pages={102788, 41},
}

\bib{dagsamIX}{article}{
  author={Normann, Dag},
  author={Sanders, Sam},
  title={The Axiom of Choice in Computability Theory and Reverse Mathematics},
  journal={Journal of Logic and Computation},
  volume={31},
  date={2021},
  number={1},
  pages={297-325},
}

\bib{dagsamXI}{article}{
  author={Normann, Dag},
  author={Sanders, Sam},
  title={On robust theorems due to Bolzano, Jordan, Weierstrass, and Cantor in Reverse Mathematics},
  journal={Journal of Symbolic Logic, DOI: \url {doi.org/10.1017/jsl.2022.71}},
  pages={pp.\ 51},
  date={2022},
}

\bib{dagsamX}{article}{
  author={Normann, Dag},
  author={Sanders, Sam},
  title={On the uncountability of $\mathbb {R}$},
  journal={Journal of Symbolic Logic, DOI: \url {doi.org/10.1017/jsl.2022.27}},
 VOLUME = {87},
      YEAR = {2022},
    NUMBER = {4},
     PAGES = {1474--1521},
}

\bib{dagsamXIV}{article}{
  author={Normann, Dag},
  author={Sanders, Sam},
  title={The Biggest Five of Reverse Mathematics},
  journal={Journal for Mathematical Logic, doi: \url {https://doi.org/10.1142/S0219061324500077}},
  pages={pp.\ 56},
  date={2023},
}

\bib{dagsamXV}{article}{
  author={Normann, Dag},
  author={Sanders, Sam},
  title={On the computational properties of open sets},
  journal={Submitted, arxiv: \url {https://arxiv.org/abs/2401.09053}},
  pages={pp.\ 26},
  date={2024},
}

\bib{tepelpinch}{article}{
  author={Pincherle, Salvatore},
  title={Sopra alcuni sviluppi in serie per funzioni analitiche (1882)},
  journal={Opere Scelte, I, Roma},
  date={1954},
  pages={64--91},
}

\bib{yamayamaharehare}{article}{
  author={Sakamoto, Nobuyuki},
  author={Yamazaki, Takeshi},
  title={Uniform versions of some axioms of second order arithmetic},
  journal={MLQ Math. Log. Q.},
  volume={50},
  date={2004},
  number={6},
  pages={587--593},
}

\bib{samhabil}{book}{
  author={Sam, Sanders},
  title={Some contributions to higher-order Reverse Mathematics},
  year={2022},
  publisher={Habilitationsschrift, TU Darmstadt},
}

\bib{samrep}{article}{
   author={Sanders, Sam},
   title={Representations and the foundations of mathematics},
   journal={Notre Dame J. Form. Log.},
   volume={63},
   date={2022},
   number={1},
   pages={1--28},
}

\bib{samhabil}{book}{ 
author={Sanders, Sam},
title={Some contributions to higher-order Reverse Mathematics},
year={2022},
publisher={Habilitationsschrift, TU Darmstadt}
}

\bib{samBIG}{article}{
  author={Sanders, Sam},
  title={Big in Reverse Mathematics: the uncountability of the real numbers},
  year={2023},
  journal={Journal of Symbolic Logic, doi:\url {https://doi.org/10.1017/jsl.2023.42}},
  pages={pp.\ 26},
}

\bib{samBIG2}{article}{
  author={Sanders, Sam},
  title={Big in Reverse Mathematics: measure and category},
  year={2023},
  journal={Journal of Symbolic Logic, doi: \url {https://doi.org/10.1017/jsl.2023.65}},
  pages={pp.\ 44},
}

\bib{samBIG3}{article}{
  author={Sanders, Sam},
  title={Approximation theorems and Reverse Mathematics},
  year={2023},
  journal={To appear in Journal of Symbolic Logic, arxiv: \url {https://arxiv.org/abs/2311.11036}},
  pages={pp.\ 25},
}

\bib{samBIG4}{article}{
  author={Sanders, Sam},
  title={Riemann integration and Reverse Mathematics},
  year={2023},
  journal={Submitted},
  pages={pp.\ 25},
}

\bib{simpson2}{book}{
  author={Simpson, Stephen G.},
  title={Subsystems of second order arithmetic},
  series={Perspectives in Logic},
  edition={2},
  publisher={CUP},
  date={2009},
  pages={xvi+444},
}

\bib{sigohi}{incollection}{
    Author = {Simpson, Stephen G.},
    Title = {{The G\"odel hierarchy and reverse mathematics.}},
    BookTitle = {{Kurt G\"odel. Essays for his centennial}},
    Pages = {109--127},
    Year = {2010},
    Publisher = {Cambridge University Press},
}

\bib{stillebron}{book}{
  author={Stillwell, J.},
  title={Reverse mathematics, proofs from the inside out},
  pages={xiii + 182},
  year={2018},
  publisher={Princeton Univ.\ Press},
}

\bib{taokejes}{collection}{
  author={Tao, Terence},
  title={{Compactness and Compactification}},
  editor={Gowers, Timothy},
  pages={167--169},
  year={2008},
  publisher={The Princeton Companion to Mathematics, Princeton University Press},
}

\end{biblist}
\end{bibdiv}
\bye